\newcounter{mnotecount}[section]
\numberwithin{equation}{section}
\newtheorem{theorem}{Theorem}
\newtheorem{definition}[theorem]{Definition}
\newtheorem{lemma}[theorem]{Lemma}
\newtheorem{remark}[theorem]{Remark}
\newtheorem{corollary}[theorem]{Corollary}
\newcommand{\Naturals}{\mathbb{N}}
\newcommand{\Reals}{\mathbb{R}}
\newcommand{\Stwo}{S^2}
\newcommand{\di}{\mathrm{d}}
\newcommand{\diNormal}{\di\nu}
\newcommand{\diVol}{\di\mu_g}
\newcommand{\diVolSigmat}{\di\mu_{\Sigma_t}}
\newcommand{\diVolCone}{\di\mu_{\fdconex}}
\newcommand{\diVolModelx}{\di^3 {x}}
\newcommand{\diVolModeltx}{\di^4 {x}}
\newcommand{\diVolModelv}{\di^3 v}
\newcommand{\pt}{\partial_t}
\newcommand{\pr}{\partial_r}
\newcommand{\ph}{\partial_\theta}
\newcommand{\pp}{\partial_\phi}
\newcommand{\Lie}{\mathrm{Lie}}
\newcommand{\metricg}{g}
\newcommand{\ia}{a}
\newcommand{\ib}{b}
\newcommand{\ic}{c}
\newcommand{\id}{d}
\newcommand{\ie}{e}
\newcommand{\ig}{g}
\newcommand{\ua}{{\underline{a}}}
\newcommand{\ub}{{\underline{b}}}
\newcommand{\uc}{{\underline{c}}}
\newcommand{\ops}{(}
\newcommand{\cls}{)}
\newcommand{\EnergyModel}{E_{\textrm{model,3}}}
\newcommand{\chiFar}{\mathbf{1}_{r\not\simeq3M}}
\newcommand{\GeodesicEnergy}{e}
\newcommand{\GeodesicLz}{l_z}
\newcommand{\GeodesicQ}{q}
\newcommand{\fdcone}{{\mathcal{C}^+}}
\newcommand{\fdconex}{{\mathcal{C}^+_x}}
\newcommand{\ptx}{x}  
\newcommand{\xb}{x}
\newcommand{\vb}{v}
\newcommand{\tb}{t}
\newcommand{\rb}{r}
\newcommand{\hb}{\theta}
\newcommand{\pb}{\phi}
\newcommand{\KDelta}{\Delta}
\newcommand{\KSigma}{\Sigma}
\newcommand{\KPi}{\Pi}
\newcommand{\CurlyR}{\mathcal{R}}
\newcommand{\DiffCurlyRTilde}{\tilde{\mathcal{R}}'}
\newcommand{\DDiffCurlyRTTilde}{\tilde{\tilde{\mathcal{R}}}''}
\newcommand{\OpL}{\mathcal{L}}
\newcommand{\TensorQ}{Q}
\newcommand{\EMS}{\mathrm{T}}
\newcommand{\SymOp}{S}
\newcommand{\GenEnergy}[1]{E_{#1}}
\newcommand{\GenBulk}[1]{\Pi_{#1}}
\newcommand{\vecTperp}{T_\perp}
\newcommand{\omegaperp}{\omega_\perp}
\newcommand{\vecTBlend}{T_{\chi}}
\newcommand{\omegaH}{\omega_{\mathcal{H}}}
\newcommand{\vecTBlendBF}{\mathbf{T}_{\chi}}
\newcommand{\vecMorawetz}{\mathbf{A}}
\newcommand{\fnq}{q}
\newcommand{\fnMa}{z}
\newcommand{\fnMb}{w}
\newcommand{\fnMaa}{z_1}
\newcommand{\fnMab}{z_2}
\newcommand{\fnMba}{w_1}
\newcommand{\fnMbb}{w_2}
\newcommand{\epsilondtsquared}{\epsilon_{e^2}}
\newcommand{\half}{\tfrac{1}{2}}
\title[Hidden symmetries and Kerr-Vlasov decay]{Hidden symmetries and decay for the Vlasov equation on the Kerr spacetime}
\author{L. Andersson}
\address{Albert Einstein Institute (Max-Planck Institute for Gravitational Physics)\\
Am Mühlenberg 1\\
D-14476 Potsdam, Germany }
\email{laan@aei.mpg.de}
\author{P. Blue}
\address{School of Mathematics\\
The University of Edinburgh\\
Maxwell Institute for Mathematical Sciences\\
James Clerk Maxwell Building\\
Peter Guthrie Tait Road\\
Edinburgh\\
EH9 3FD\\
United Kingdom}
\email{P.Blue@ed.ac.uk}
\author{J. Joudioux}
\address{Gravitationsphysik\\
Faculty of Physics\\
University of Vienna\\
Währinger Strasse, 17\\
1090 Vienna\\
Austria}
\email{jeremie.joudioux@univie.ac.at}
\date{\today}
\begin{document}
\begin{abstract}
This paper proves the existence of a bounded energy and integrated
energy decay for solutions of the massless Vlasov equation in the
exterior of a very slowly rotating Kerr spacetime. This combines methods previously developed to prove similar results for the wave equation on the exterior of a very slowly rotating Kerr spacetime with recent work applying the vector-field method to the relativistic Vlasov equation. 
\end{abstract}

\maketitle

\section{Introduction}

In this paper we prove the existence of a bounded energy, and an integrated energy decay estimate for
solutions of the massless Vlasov equation in the exterior of a very
slowly rotating Kerr spacetime. \footnotetext{\bfseries Preprint number \mdseries: UWThPh-2016-28 (University of Vienna)}

For parameters $a, M$, with $|a| \leq M$, the exterior region of the Kerr spacetime is represented in Boyer-Lindquist coordinates $(t,r,\theta,\phi)$ by $\Reals\times (r_+,\infty)\times\Stwo$ with the Lorentzian metric
\begin{align}
\metricg
= -\left(1-\frac{2Mr}{\KSigma}\right) \di t^2 
-\frac{4Mar\sin^2\theta}{\KSigma}\di t\di\phi 
 +\frac{\KPi\sin^2\theta}{\KSigma}\di\phi^2
+\frac{\KSigma}{\KDelta}\di r^2
+\KSigma \di\theta^2 ,
\label{eq:KerrMetricIntrouctureucture}
\end{align}
where $r_+=M+\sqrt{M^2-a^2}$, and
\begin{align*}
\KDelta&=r^2-2Mr+a^2, &
\KSigma&=r^2+a^2\cos^2\theta ,&
\KPi&=(r^2+a^2)^2-a^2\KDelta\sin^2\theta .
\end{align*}
For $|a|\leq M$, the Kerr spacetimes contain a black hole and are stationary and axisymmetric, that is to say $\partial_t$ and $\partial_\phi$ are Killing vector fields. Although the exterior is extendible as an analytic manifold, it is globally hyperbolic and foliated by surfaces of constant $t$, $\Sigma_t$, which are Cauchy surfaces. 

The Vlasov equation governs the evolution of massive or massless particles which do not self-interact \cite{lindquist:1966}. The particles are represented by a distribution function on phase space, which evolves under the geodesic flow, so it is constant along geodesics. In the context of kinetic theory, the equation is known as the collisionless Boltzmann equation. 

Let $(\mathcal{M}, \metricg)$ be a time oriented Lorentzian manifold 
of dimension $1+3$, with timelike vector field $T_+$. For the case of massless Vlasov, the distribution function is a non-negative function defined on the bundle of future 
light cones 
$\fdcone$, 
\begin{align*}
\fdcone &= \bigcup_{x\in \mathcal{M}} \fdconex, \\
\fdconex &= \{ (x,v): v\in T_p\mathcal{M}, \ g(v,v)=0, g(v,T_+)< 0 \} .
\end{align*}
For $m > 0$, the set $g(v,v) = -m^2$, $g(v,T_+) < 0$ is sometimes called the mass shell, and $\fdcone$ is its analogue for the massless case considered here.
The Vlasov equation is
\begin{equation}\label{eq:Vlasov}
\mathcal{X} f = 0,
\end{equation}
where $\mathcal{X}$ is the geodesic spray, the vector field on $T\mathcal{M}$ which generates the geodesic flow. The geodesic spray is the Lagrangian vector field of $L = \half g(v,v)$ \cite[\S 3.7]{MR515141}. 
We have that $\mathcal{X} L = 0$, in particular $\mathcal{X}$ is tangent to $\fdcone$.  In case the distribution function $f$ is a function $f: \mathcal C^+ \to \Reals$, we shall refer to equation \eqref{eq:Vlasov} as the massless Vlasov equation. 

A local coordinate system $(x^a)$ on $\mathcal{M}$ induces natural coordinates $(x^a, v^a)$ on $T\mathcal{M}$, where $v^a = dx^a(v)$. The coordinate form of $\mathcal{X}$ is 
\begin{equation}
\mathcal{X} = v^a \left( \dfrac{\partial}{\partial x^a} -  v^b \Gamma^{c}{}_{ab}  \dfrac{\partial}{\partial v^c}\right),
\end{equation}
where $\Gamma^c{}_{ab}$ is the Christoffel symbol of the metric $g_{ab}$. 

In the Kerr exterior, it is convenient to use the Boyer-Lindquist coordinates $(\xb^a) = (\tb,\rb,\hb,\pb)$ and the corresponding natural coordinates $(x^a, v^a)$. On $\fdcone$, we locally use coordinates $(t,r,\theta,\phi,v^r,v^\theta,v^\phi)$, and treat the quantities $v^t, v_t, v_r, v_\theta, v_\phi$ as functions of these. 
To facilitate the presentation of our main result, we introduce 
\begin{align*}
\EnergyModel[f](t)
= \int_{\Sigma_t} \int_{\fdconex} 
\left( \frac{(r^2+a^2)^2}{\KDelta}v_t^2 +\KDelta v_r^2 +v_\theta^2 +\frac{1}{\sin^2\theta}v_\phi^2 \right)
|f|_2
\diVolModelv \diVolModelx ,
\end{align*}
where 
\begin{align*}
|f|_2=\left|M^2 v_t^2 +v_\theta^2 +\frac{1}{\sin^2\theta}v_\phi^2\right|^2 f,\quad
\diVolModelx=\sin\theta \di\rb\di\hb\di\pb ,\quad
\diVolModelv=\frac{1}{|v_t|}
r^2 \sin\theta \di \vb^r \di \vb^\theta \di \vb^\phi .
\end{align*}
The term $|f|_2$ should be understood as a strengthening of the $f$ by two factors of  $M^2 v_t +v_\phi^2 +\sin^{-2}\theta v_\theta^2$. As explained in Section \ref{sec:morawetz2}, these two factors arise from strengthening the energy by two second-order multiplication symmetries of the Vlasov equation. The volume forms $\diVolModelx$ and $\diVolModelv$ are given here
because they have simple coordinate expressions, although they are not
the naturally induced volume forms defined on $\Sigma_t$ and
$\fdconex$, which are used in the rest of this paper and introduced in Sections  \ref{sec:multsym} and \ref{sec:stressenergy}.

Our main results are
\begin{theorem}[Uniformly bounded energy]
\label{IntroThm:BoundedEnergy}
There are positive constants $C$ and $\bar{\epsilon}$ such that if
$M>0$, $|a|\leq \bar{\epsilon}M$, and $f:\fdcone\rightarrow[0,\infty)$
  is a smooth solution of the Vlasov equation \eqref{eq:Vlasov} in the
  exterior of the Kerr spacetime with parameters $(M,a)$, then, for all $t$ in $\Reals$,

\begin{align}
\EnergyModel[f](t)
\leq C \EnergyModel[f](0) .
\end{align}
\end{theorem}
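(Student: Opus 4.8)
The plan is to recognize $\EnergyModel[f](t)$ as being comparable, uniformly for $|a|\le\bar\epsilon M$, to the flux through $\Sigma_t$ of a Vlasov energy current built from a weighted distribution and a globally future-causal replacement for $\pt$. I would first exploit the multiplication symmetries of Section \ref{sec:multsym}. The geodesic energy $\GeodesicEnergy=-v_t$, the azimuthal angular momentum $\GeodesicLz=v_\phi$, and the Carter constant $\GeodesicQ$ associated with the Killing tensor (the hidden symmetry of the title) are each conserved along the flow, so any smooth function of them is a multiplication symmetry: if $\mathcal{X}f=0$ then $\mathcal{X}(\Phi f)=0$ for $\Phi=\Phi(\GeodesicEnergy,\GeodesicLz,\GeodesicQ)$. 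Using the massless Carter relation $\GeodesicQ=v_\theta^2+\cos^2\theta\bigl(\sin^{-2}\theta\,v_\phi^2-a^2v_t^2\bigr)$, the weight $W:=M^2v_t^2+v_\theta^2+\sin^{-2}\theta\,v_\phi^2$ appearing in $|f|_2$ satisfies $W=\bigl(M^2\GeodesicEnergy^2+\GeodesicQ+\GeodesicLz^2\bigr)+a^2\cos^2\theta\,\GeodesicEnergy^2$, where the first summand is an exact multiplication symmetry and the second is $O(a^2)$. Hence $\tilde f:=W^2f$ solves the Vlasov equation up to the source $\mathcal{X}\tilde f=2W(\mathcal{X}W)f=O(a^2)$, and by Section \ref{sec:stressenergy} its stress-energy tensor $\EMS[\tilde f]_{\mu\nu}=\int_{\fdconex}v_\mu v_\nu\,\tilde f$ is divergence free up to a correspondingly small source.

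Next I would use the blended vector field $\vecTBlend=\pt+\chi\,\omegaH\,\pp$, interpolating between the Killing field $\pt$ near infinity and the horizon-generating combination near $\mathcal{H}$, where $\omegaH=O(a)$ is the angular velocity of the horizon and $\chi$ a radial cutoff supported away from the trapped set $r\simeq3M$. The decisive structural fact, special to Vlasov, is that $\EMS[\tilde f]$ obeys the dominant energy condition pointwise: for the future-null momentum $v$ and any two future-causal vectors $X,Y$ one has $g(v,X)g(v,Y)\ge0$, so $\EMS[\tilde f](X,Y)\ge0$. Since $\vecTBlend$ — unlike $\pt$ — can be arranged to be future-causal throughout the exterior for $a$ small, this bypasses the superradiance obstruction: both the flux $\int_{\Sigma_t}\EMS[\tilde f](\vecTBlend,n)$ and the horizon flux are non-negative. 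Restricting to the null cone $g(v,v)=0$ and comparing positive-definite quadratic forms, and accounting for the stated coordinate volume forms, this flux is comparable to $\EnergyModel[f](t)$ with constants uniform in small $a$.

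With these preliminaries, I would apply the divergence theorem to $J_\mu=\EMS[\tilde f]_{\mu\nu}(\vecTBlend)^\nu$ on the slab bounded by $\Sigma_0$ and $\Sigma_t$. The boundary contributions are $\EnergyModel[f](t)$ and $\EnergyModel[f](0)$ up to uniform constants, the horizon flux has a favorable sign and is discarded, and the bulk term is $\int_{\{0\le s\le t\}}\bigl(\EMS[\tilde f]^{\mu\nu}\,{}^{(\vecTBlend)}\pi_{\mu\nu}+(\text{source error})\bigr)$, where the deformation tensor ${}^{(\vecTBlend)}\pi$ vanishes except where $\chi'\neq0$ and is of size $O(a)$ (because $\pt$ and $\pp$ are Killing), while the source error is $O(a^2)$ by the computation above.

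The main obstacle is to absorb this bulk term uniformly in $t$. A naive Grönwall argument on $\EnergyModel[f](t)\le\EnergyModel[f](0)+Ca\int_0^t\EnergyModel[f](s)\,ds$ would only yield exponential growth; uniform boundedness instead requires the full integrated local energy decay (Morawetz) estimate of Section \ref{sec:morawetz2}, which provides a bound of the form $\int_0^\infty\int_{\Sigma_s}\chi_{\mathrm{loc}}\,\EMS[\tilde f](n,n)\,\di\mu_{\Sigma_s}\,ds\lesssim\EnergyModel[f](0)$ over bounded $r$-regions. This is precisely where the hidden symmetry is indispensable: the Carter Killing tensor furnishes the commuting second-order operator needed to construct a Morawetz current whose degeneracy is confined to the photon sphere $r\simeq3M$, which is disjoint from the support of $\chi'$. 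Granting this estimate, the $O(a)$ bulk term is controlled by $Ca\,\EnergyModel[f](0)$; since the Morawetz estimate itself carries only $O(a)$ errors off Schwarzschild, the energy identity and the Morawetz identity close simultaneously for $\bar\epsilon$ small, giving $\EnergyModel[f](t)\le C\EnergyModel[f](0)$ uniformly for $t\ge0$. Running the same argument backward on the slab $\{t\le s\le0\}$ handles $t<0$, establishing the claim for all $t\in\Reals$.
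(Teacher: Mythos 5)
Your architecture is essentially the paper's: strengthen the distribution by two second-order multiplication symmetries, use the blended multiplier $\vecTBlend=\pt+\chi\omegaH\pp$, which is future-causal for small $|a|/M$ so the dominant energy condition makes its flux non-negative and comparable to $\EnergyModel$ (Lemma \ref{lem:EnergyBounds} and its Corollary), note that the deformation term is supported on $\{\pr\chi\neq 0\}$ away from $r\simeq 3M$ and carries a factor $\omegaH=O(a)$, and absorb it into the Morawetz bulk, closing the energy and Morawetz identities simultaneously for $\bar\epsilon$ small --- this is exactly the bootstrap encoded in \eqref{prop1}--\eqref{prop4} and carried out in Section \ref{sec:morawetz4}, with Lemma \ref{lem:ControllingBoundaryTerms} supplying the boundary comparison $|\GenEnergy{\vecMorawetz}|\lesssim\GenEnergy{\vecTBlendBF}$ that your ``close simultaneously'' implicitly uses. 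The one genuine gap is your choice to commute with the weight $W=M^2v_t^2+v_\theta^2+\sin^{-2}\theta\,v_\phi^2$ itself rather than with the exact symmetry $\OpL=M^2\GeodesicEnergy^2+\GeodesicLz^2+\GeodesicQ$. Since $W$ differs from an exact symmetry by an $a^2\,\GeodesicEnergy^2$ term with a $\theta$-dependent coefficient, $\tilde f=W^2f$ is not a solution, and your divergence identity acquires the source $\int_{\fdconex} v_\ib\vecTBlend^\ib(\mathcal{X}\tilde f)\,\diVolCone$ with $\mathcal{X}W\propto a^2\GeodesicEnergy^2\sin\theta\cos\theta\,v_\theta\,\KSigma^{-1}$.

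You dismiss this source as ``$O(a^2)$,'' but it cannot be absorbed by the estimates you have available: after multiplication by the spacetime volume density $\KSigma\sin\theta$ the factor $\KSigma^{-1}$ cancels, so the source density is of size $a^2M^{-3}\OpL^3 f$ uniformly in $r$, with no radial decay and no vanishing at the photon sphere. The Morawetz bulk that Lemma \ref{lem:MorawetzDifferentiated} actually controls is $M\KDelta^2(r^2+a^2)^{-2}v_r^2|f|_2+r^5\DiffCurlyRTilde\DiffCurlyRTilde\OpL f$, which away from $r\simeq 3M$ is comparable to $r^{-1}\OpL^3 f$ and which degenerates entirely at the roots of $\DiffCurlyRTilde$ near $r=3M$. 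Hence the ratio of your source to the controlled bulk grows like $r$ at infinity and is unbounded near the photon sphere; no smallness of $\bar\epsilon$ closes the absorption, and retreating to Gr\"onwall reinstates the exponential growth you yourself flagged as inadmissible. The repair is precisely the paper's move: strengthen by the exact symmetry, $\tilde f=\OpL^2f$ (or equivalently by your Carter-constant combination $M^2\GeodesicEnergy^2+Q+\GeodesicLz^2$, which is also exactly conserved), so that $\mathcal{X}\tilde f=0$ identically and no source appears, and only at the level of the final fluxes invoke the uniform pointwise equivalence of $\OpL$ and $W$ for $|a|\leq\bar\epsilon M$ to convert $\GenEnergy{\vecTBlendBF}[f]$ into $\EnergyModel[f]$, as in the Corollary following Lemma \ref{lem:EnergyBounds}. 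With that single substitution your proposal coincides with the paper's proof.
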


\begin{theorem}[Morawetz estimate]
\label{IntroTheorems:Morawetz}
There are positive constants $C$, $\bar{\epsilon}$, and $\bar{r}$ and
a function $\chiFar$ which is identically $1$ for $|r-3M|\geq \bar{r}$
and zero otherwise such that if $M>0$, $|a|\leq \bar{\epsilon}M$, and
$f:\fdcone\rightarrow[0,\infty)$ is a smooth solution of the Vlasov equation \eqref{eq:Vlasov} in the exterior of the Kerr spacetime with parameters $(M,a)$, then,
\begin{align}
\int_{-\infty}^\infty\int_{\Sigma_t} \int_{\fdconex}
\left( \left(M\frac{\KDelta^2}{(r^2+a^2)^2}\right)v_r^2 +\chiFar\frac{1}{r}\left(M^2v_t^2 +v_\theta^2 +\frac{1}{\sin^2\theta}v_\phi^2\right) \right)
|f|_2 \diVolModelv \diVolModeltx ,\nonumber\\
\leq C \EnergyModel[f](0) ,
\label{eq:IntroMorawetzCutOff}
\end{align}
where $\diVolModeltx=\di\bar{t}\diVolModelx$.

More precisely, 
\begin{align}
\int_{-\infty}^{\infty} \int_{\Sigma_t} \int_{\fdconex} 
M\frac{\KDelta^2}{(r^2+a^2)^2} v_r^2 |f|_{2} 
+r^5\DiffCurlyRTilde\DiffCurlyRTilde \OpL f
\diVolCone\diVol 
\leq C\EnergyModel[f](0), 
\label{eq:IntroMorawetzCurlyR}
\end{align}
where $\DiffCurlyRTilde$ is given in equation
\eqref{eq:DefnDiffCurlyRTilde} and where $\diVolCone$ and $\diVol$ are
the natural volume forms on $\fdconex$ and $\mathcal{M}$ by the metric
$\metricg$. 
\end{theorem}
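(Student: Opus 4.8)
The plan is to run a multiplier argument for the transport equation in the spirit of the vector-field method, organised around the \emph{hidden symmetry} of Kerr. The starting point is that the geodesic spray $\mathcal{X}$ preserves the Liouville volume, so it is divergence-free with respect to $\diVolCone\,\diVol$; consequently, for any weight $w$ on $\fdcone$ and any solution $f$ of \eqref{eq:Vlasov}, one has $\mathcal{X}(wf)=(\mathcal{X}w)\,f$, and integrating over a slab $\{t_0\le t\le t_1\}$ converts $\int\!\!\int (\mathcal{X}w)\,f\,\diVolCone\,\diVol$ into the difference of boundary fluxes over $\fdcone|_{\Sigma_{t_0}}$ and $\fdcone|_{\Sigma_{t_1}}$. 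Before choosing $w$ I would first replace $f$ by the strengthened density $|f|_2$: this is $f$ multiplied by the square of the conserved quantity $M^2v_t^2+v_\theta^2+\sin^{-2}\theta\,v_\phi^2$ assembled from the Killing fields $\partial_t,\partial_\phi$ and the Kerr Killing tensor (the Carter constant), i.e. from the two second-order multiplication symmetries of Section \ref{sec:morawetz2}. Since this factor is (in the slowly rotating regime) annihilated by $\mathcal{X}$ up to controllable errors, $|f|_2$ again essentially solves \eqref{eq:Vlasov}, and the whole argument can be carried out at the level of $|f|_2$.

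The choice of $w$ is dictated by the separated radial structure. For null geodesics, Carter's radial potential $\CurlyR$ controls $\KDelta^2 v_r^2$, and its critical points are exactly the trapped (photon-sphere) orbits; the modified potential $\CurlyRTilde$ is engineered so that its derivative $\DiffCurlyRTilde$ vanishes precisely on the trapped set (at $r=3M$ when $a=0$). Taking $w$ proportional to $\DiffCurlyRTilde\,v_r$ and computing $\mathcal{X}w$, I expect the fibre-integral of $(\mathcal{X}w)\,|f|_2$ to produce exactly the two terms in the bulk integrand of \eqref{eq:IntroMorawetzCurlyR}: a coercive radial contribution $M\,\KDelta^2(r^2+a^2)^{-2}v_r^2\,|f|_2$, which degenerates only at the horizon $\KDelta=0$ and nowhere else, together with an angular/energy contribution of the form $r^5\,\DiffCurlyRTilde\,\DiffCurlyRTilde\,\OpL f$, where $\OpL f$ packages the transverse momentum density.

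The main obstacle is the positivity of the bulk term at the photon sphere. Because $\DiffCurlyRTilde$ degenerates there, the angular/energy density $r^5(\DiffCurlyRTilde)^2\,\OpL f$ is non-coercive across trapping; this is the kinetic analogue of the trapping obstruction for the wave equation, and it is the reason the cut-off estimate \eqref{eq:IntroMorawetzCutOff} controls the $(M^2v_t^2+v_\theta^2+\sin^{-2}\theta\,v_\phi^2)$ directions only on $\mathrm{supp}\,\chiFar$, i.e. away from $r\simeq 3M$, while keeping the $v_r^2$ term everywhere. The technical heart is to verify that $r^5(\DiffCurlyRTilde)^2\,\OpL f\ge 0$ pointwise on each fibre — which should follow from $f\ge 0$ together with the non-negativity built into the squared weight and the Killing-tensor quadratic form — and to quantify the non-degeneracy of $(\DiffCurlyRTilde)^2$ on $\{|r-3M|\ge \bar{r}\}$, so that it bounds the angular/energy density from below there.

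Finally I would close the estimate. The smallness $|a|\le\bar{\epsilon} M$ is used to treat Kerr perturbatively off Schwarzschild: the off-diagonal $\di t\,\di\phi$ term, the $\theta$-dependence of $\KSigma$, and the failure of $M^2v_t^2+v_\theta^2+\sin^{-2}\theta\,v_\phi^2$ to be exactly conserved all generate error terms in $\mathcal{X}w$ of relative size $O(\bar{\epsilon})$, which are absorbed into the coercive Schwarzschild density once $\bar{\epsilon}$ is small. The boundary fluxes over $\fdcone|_{\Sigma_{t_0}}$ and $\fdcone|_{\Sigma_{t_1}}$ are bounded by $\EnergyModel[f]$ at those times, and the uniformly bounded energy Theorem \ref{IntroThm:BoundedEnergy} controls these by $\EnergyModel[f](0)$. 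Letting $t_0\to-\infty$, $t_1\to+\infty$ and discarding the non-negative bulk gives \eqref{eq:IntroMorawetzCurlyR}; restricting the angular/energy part to $\{|r-3M|\ge\bar{r}\}$, where $(\DiffCurlyRTilde)^2$ is bounded below, then yields the cut-off form \eqref{eq:IntroMorawetzCutOff}.
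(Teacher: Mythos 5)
Your overall skeleton is sound, and it is essentially the paper's argument rewritten at the level of phase-space multipliers rather than symmetry-strengthened stress-energy tensors: your $w\propto \DiffCurlyRTilde\,v_r$ is the fibrewise contraction with $v$ of the paper's radial field $\vecMorawetz^{\ia\ua\ub}=-\fnMa\fnMb\OpL^{(\ua}\DiffCurlyRTilde^{\ub)}\pr^{\ia}$, and your Liouville/flux identity is the divergence identity for $\EMS_{\ia\ib\ua\ub}$. One structural difference is worth flagging: you invoke Theorem \ref{IntroThm:BoundedEnergy} to control the slice fluxes, whereas the paper proves the two theorems \emph{simultaneously} by a bootstrap --- necessarily so in its logic, since $\vecTBlend$ is not Killing and the growth of its energy, estimate \eqref{eq:GrowthInTBlendBFEnergyByMorawetz}, is itself absorbed by $\tfrac{|a|}{M}\GenBulk{\vecMorawetz}$. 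Taking the energy bound as a black box, your route does close, provided you also prove the slice bound $|\GenEnergy{\vecMorawetz}|\lesssim \GenEnergy{\vecTBlendBF}$ (the content of Lemma \ref{lem:ControllingBoundaryTerms}), which your proposal asserts only implicitly.

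There are, however, two genuine gaps. First, you misplace the technical heart: $r^5\DiffCurlyRTilde^2\,\OpL f\geq 0$ is trivial (a square times two manifestly nonnegative factors), while the step you wave through --- that the fibre integral of $(\mathcal{X}w)\,|f|_2$ ``produces exactly'' a coercive term $M\KDelta^2(r^2+a^2)^{-2}v_r^2|f|_2$ degenerating only at the horizon --- is precisely where all the work lies. The coefficient of $v_r^2$ in the bulk is the second-derivative expression involving $\DDiffCurlyRTTilde$ in Lemma \ref{Lemma:BasicMorawetzDeformation}, and its positivity for all $r>r_+$ and all values of the constants of motion fails for generic weights: with the natural choice ($\fnMab=1$) the coefficient of $\GeodesicEnergy^2$ in $-\DDiffCurlyRTTilde$ vanishes identically, and positivity in that direction must be manufactured by the perturbation $\fnMab=1-M^2\epsilondtsquared\KDelta(r^2+a^2)^{-2}$ together with the specific choices of $\fnMaa$, $\fnMba$, $\fnMbb$ (Section \ref{sec:morawetz3} and Lemma \ref{lem:MorawetzDifferentiated}); no version of this analysis appears in your plan. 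Second, your strengthening by the \emph{non-conserved} quantity $M^2v_t^2+v_\theta^2+\sin^{-2}\theta\,v_\phi^2$, with the resulting $O(\bar{\epsilon})$ commutation errors ``absorbed into the coercive Schwarzschild density,'' fails exactly where that density degenerates: near $r=3M$ the bulk controls only $v_r^2$ plus the degenerate $\DiffCurlyRTilde^2\OpL$ term, and near the horizon the $v_r^2$ weight vanishes like $\KDelta^2$, so error terms proportional to the full sixth-order momentum density cannot be absorbed pointwise on the fibre. The paper's device --- the very point of the hidden symmetry --- is that the strengthening is performed with the \emph{exact} constants of motion $e^2, el_z, l_z^2, q$, so that $\OpL=M^2\GeodesicEnergy^2+\GeodesicLz^2+\GeodesicQ$ is exactly annihilated by $\mathcal{X}$ and no commutation errors arise at all; the non-conserved combination $M^2v_t^2+v_\theta^2+\sin^{-2}\theta\,v_\phi^2$ enters only through uniform equivalence with $\OpL$ in the non-degenerate boundary energies and in the final cutoff step producing \eqref{eq:IntroMorawetzCutOff}. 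Commute with the exact symmetries and carry out the weight analysis of Lemma \ref{lem:MorawetzDifferentiated}, and your proposal becomes the paper's proof.
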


The main innovation in this paper is to combine the vector field
technique introduced in \cite{fjs15} for proving dispersive estimate
for the relativistic Vlasov equation with earlier work on dispersion
of fields outside a Kerr black hole, in particular the method of
\cite{AnderssonBlue:KerrWave}, see also \cite{MR3467368}.

The method used in \cite{AnderssonBlue:KerrWave} is a generalization of the vector-field method, which relies on the stress-energy tensor and spacetime symmetries to construct momenta appropriate for the proof of energy estimates and integrated energy estimates. The proof of the non-linear stability of Minkowski space \cite{ChristodoulouKlainerman} is an important application of this method. The vector-field method was recently applied to prove dispersive estimates for the relativistic Vlasov equation as part of a proof of non-linear stability for the massless and massive Vlasov-Nordstr\"om systems on Minkowski space \cite{fjs15} (see also \cite{smu} for the nonrelativistic Vlasov equation). Previous stability results for Minkowski space as a solution
of the Einstein-Vlasov system include the massive \cite{ReinRendall} and massless Vlasov cases
\cite{Dafermos:Vlasov} in spherical symmetry and, recently, the massless case without symmetry \cite{Taylor}. 

Energy bounds and Morawetz estimates, analogous to Theorems
\ref{IntroThm:BoundedEnergy} and \ref{IntroTheorems:Morawetz}
respectively, have already been proved for the wave equation outside a
very slowly rotating Kerr black hole
\cite{DafermosRodnianski:KerrEnergyBound, TataruTohaneanu,
  AnderssonBlue:KerrWave}. Strictly speaking, an energy bound should
be an integral over spacelike hypersurfaces of an integrand that is
quadratic in $v$, rather than of order $6$, as appears in
$\EnergyModel[f]$, but we will consistently ignore this
distinction. Away from (an open set about) $r=3M$, the horizon at
$r=r_+$, and null infinity at $r\rightarrow\infty$, the integrand in
the Morawetz estimate is a bounded multiple of the integrand appearing
in the energy; however, the integral is over all of space-time,
instead of a single spacelike hypersurface. Thus, the Morawetz
estimate implies that the local energy in a fixed $r$ region (away
from $r=r_+$ and $r\rightarrow\infty$ and sufficiently far from
$r=3M$) is integrable in time. Hence, on average, it must decay in
time. Thus, Morawetz estimates are also called integrated local energy
decay estimates. Energy bounds and Morawetz estimates are a useful
tool in proving pointwise estimates, for instance of the form
$\sup_{r\in (r_+,R],(\theta,\phi)\in S^2}|\psi(t,r,\theta,\phi)|
\lesssim t^{-p}$ for some $p$.  
For the wave equation in the subextremal range
$|a|<M$, the entire argument from energy estimates and Morawetz bounds
to pointwise bounds can be found in
\cite{DafermosRodnianskiShlapentokh-Rothman}. 

In the Kerr spacetime, there are null geodesics that can orbit at
fixed $r$, and these are the primary obstacle in proving Morawetz
estimates. Furthermore, for $|a|>0$, the vector field $\partial_t$
ceases to be timelike near $r=r_+$, which prevents the existence of a
conserved, positive energy. Since both the wave equation and massless
Vlasov equation admit solutions that approximate null geodesics for
arbitrary lengths of time, any Morawetz estimate must degenerate on
such solutions. On the orbiting null geodesics, the factor
$\DiffCurlyRTilde$ vanishes, providing sufficient degeneracy; for
$|a|\ll M$, the roots of $\DiffCurlyRTilde$ are all near $r=3M$, which
is why $\DiffCurlyRTilde^2$ in equation \eqref{eq:IntroMorawetzCurlyR} can
be replaced by $\chiFar (M^2v_t^2 v_\theta^2 +\sin^{-2}\theta
v_\phi^2)^2$ in equation \eqref{eq:IntroMorawetzCutOff}. Our analysis is
dependent on the fact that $\DiffCurlyRTilde$ can be expanded purely
in terms of $r$ dependent factors and constants of motion along the
null geodesics. This is a consequence of the remarkable observation of
Carter that, in addition to the geodesic constants of motion arising
from the metric and the two Killing vectors, there is a fourth
constant of motion, called a hidden symmetry \cite{PhysRev.174.1559}.

Steady states for the massive Vlasov
equation in the exterior of a fixed Schwarzschild space-time (where
$a=0$ and representing the exterior of a star or black hole) have been
constructed and used to study accretion disks
\cite{RiosecoSarbach}. The existence of these steady states implies
that no Morawetz estimate, analagous to Theorem
\ref{IntroTheorems:Morawetz}, can hold for the massive Vlasov equation
outside a Schwarzschild black hole.

The formation of black holes for the massive Einstein-Vlasov system has
been studied in
\cite{livingreview,akr11}. For the coupled Einstein-massless Vlasov
system, there are spherically symmetric steady states
\cite{2015arXiv151101290A}. The existence of such solutions suggests, in
contrast to the results in this paper, that there are spherically
symmetric solutions of the Einstein-massless Vlasov system which have a
nonzero, static configuration of massless Vlasov matter outside a
Schwarzschild-like black hole. However such solutions seem to require a
large Vlasov field and cannot be small perturbations of the
Schwarzschild solution.

The paper is organized as follows. Section \ref{sec:prel} contains an introduction to the geometry of the Kerr spacetime, and in particular a discussion on the multiplication symmetries of the field in Section \ref{sec:multsym}, and a presentation of the properties of the stress-energy tensor of the Vlasov equation in Section \ref{sec:stressenergy}. Section \ref{sec:morawetz} contains the proof of the Morawetz estimates. The relevant energies are defined in Section \ref{sec:morawetz1}; Section \ref{sec:morawetz2} introduces the vector field used to perform the estimates; relevant bulk terms are estimated in Section \ref{sec:morawetz3}; and the proof is concluded in Section \ref{sec:morawetz4}.

\section{Preliminaries} \label{sec:prel}

Throughout, the indices $\ia, \ib, \ic, \dots$ denote integers in $\{0, 1,2,3\}$. Underlined indices $\ua_1, \dots,\ua_k$ are used exclusively to parametrize the set of symmetries used for the calculation, as explained in Section \ref{sec:multsym}. The Einstein summation convention is used throughout the paper.

\subsection{The Kerr geometry}\label{sec:multsym}
For $M>0$ and $|a|\leq M$, the exterior region of the Kerr space-time
is $(t,r,\omega)\in\Reals\times(r_+,\infty)\times S^2$, where
$r_+=M+\sqrt{M^2-a^2}$ is the larger of the two roots of $\KDelta=0$,
together with the metric given in equation
\eqref{eq:KerrMetricIntrouctureucture}. Typically, we will parameterise $S^2$ be
spherical coordinates $\theta,\phi$. Although this exterior can be
extended as a smooth Lorentzian manifold, it is globally hyperbolic,
with the surfaces of constant $t$ providing a foliation by Cauchy
hypersurfaces.  Thus, there is a well-posed initial-value-problem for
many PDEs, including the Vlasov equation, with initial data posed, for
example, on the hypersurface $t=0$. For $M>0$ and $|a|\leq M$, the
exterior region of the Kerr space-time describes the exterior region
of a rotating black hole.

The vector field $\vecTperp=\pt +\omegaperp\pp$ with
$\omegaperp=2aMr/\KPi$ is orthogonal to the surfaces of constant $t$,
and hence to $\pr,\ph,\pp$. This vector field is not normalised, and,
instead, $\metricg(\vecTperp,\vecTperp)=-\KDelta\KSigma/\KPi$. The
rotation speed of the black hole is
$\omegaH=a/(r_+^2+a^2)$. Independently of $\theta$, one has
$\omegaH=\lim_{r\rightarrow r_+}\omegaperp$.

Many calculations are simplified by working only with the following
form of the inverse Kerr metric:
\begin{align}
\KSigma g^{\ia\ib}
&=\KDelta \pr^\ia\pr^\ib
+\frac{1}{\KDelta}\CurlyR^{\ia\ib} ,
\label{eq:InverseMetricWithCurlyR}
\end{align}
where
\begin{align*}
\KDelta&=r^2-2Mr+a^2,\\
\KSigma&= \Omega^{-2 } = r^2+a^2\cos^2\theta,\\
\CurlyR^{\ia\ib}
&=-(r^2+a^2)^2\pt^\ia\pt^\ib -4aMr\pt^{(\ia}\pp^{\ib)} +(\KDelta-a^2)\pp^\ia\pp^\ib +\KDelta\TensorQ^{\ia\ib},\\
\TensorQ^{\ia\ib}
&=\ph^\ia\ph^\ib +\cot^2\theta\pp^\ia\pp^\ib +a^2\sin^2\theta\pt^\ia\pt^\ib .
\end{align*}

This form of the expression allows us to avoid having to work with
$\KPi=(r^2+a^2)^2-a^2\KDelta\sin\theta$, except when working with
$\vecTperp=\pt +(2aMr/\KPi)\pp$. In fact, except in the volume form
and inside $\TensorQ^{\ia\ib}$, this notation typically allows us to avoid all
$\theta$ dependent factors. It will later be useful to use a conformal factor $\Omega$ defined by 
\begin{equation*}
\Omega^{-2 } = \KSigma.
\end{equation*}
The volume form on the Kerr exterior in Boyer-Lindquist coordinates is
\begin{align*}
\sqrt{|\metricg|}\di t\di r\di\theta\di\phi
&=\KSigma\sin\theta \di t\di r\di\theta\di\phi .
\end{align*}
Since $\KSigma$ is uniformly equivalent to $r^2$, integrals with respect to this volume form are equivalent to those with respect to $\diVolModelx \di t$. 

In the Kerr spacetime, $\pt$ and $\pp$ are Killing vectors and $\TensorQ^{\ia\ib}$ is a conformal Killing tensor. We use the following notation
\begin{align*}
e&= v_\ia \pt^\ia, &
l_z&= v_\ia \pp^\ia, &
q&= v_\ia v_\ib \TensorQ^{\ia\ib} .
\end{align*}
A basis for the multiplicative factors that give symmetries for the null Vlasov equation is
\begin{align*}
\mathbb{S} &= \bigcup_{n=0}^\infty \mathbb{S}_n, &
\mathbb{S}_n&= \{ e^{n_t} l_z^{n_\phi} q^{n_q}: n_t+n_\phi+2n_q=n\} .
\end{align*}
Of particular importance in this analysis is 
\begin{align*}
\mathbb{S}_2&= \{e^2, el_z, l_z^2, q\} =\{\SymOp_\ua\}_{\ua}. 
\end{align*}
Since each element of $\mathbb{S}_2$ is the contraction of a $2$-tensor with $v_\ia v_\ib$, thus, we introduce $\SymOp_\ua^{\ia\ib}$ such that
\begin{align*}
\SymOp_\ua &= \SymOp_\ua^{\ia\ib} v_\ia v_\ib .
\end{align*}

The quantity $\CurlyR^{\ia\ib}v_\ia v_\ib$ plays a crucial role in our
analysis. It can be written as a linear combination of the
$\SymOp_\ua$, with coefficients that are polynomial in $r,M,a$. To
simplify a lot of the calculations, we use the notation $\CurlyR^\ua$
to denote these coefficients and use the Einstein summation convention
in the $\ua$ indices. We also use the notation $\CurlyR$ to denote
$\CurlyR^{\ia\ib}v_\ia v_\ib$. Thus, we have four expressions for the
following quantity
\begin{align}\label{eq:Ra}
\CurlyR
&=\CurlyR^{\ia\ib} v_\ia v_\ib
=\CurlyR^\ua \SymOp_\ua
=\CurlyR^\ua \SymOp_\ua^{\ia\ib}v_\ia v_\ib .
\end{align}
For other quantities that are linear combinations of the $\SymOp_\ua$
(possibly with coefficients that are polynomial or rational functions
of $r,M,a$), we also use the Einstein summation convention in the
$\ua$ variables to expand the quantity. In addition to $\CurlyR$ and
derivatives of rescalings of it, we also use
\begin{align*}
\OpL
&= M^2\GeodesicEnergy^2 +\GeodesicLz^2 +\GeodesicQ ,
\end{align*}
which can be expanded as 
\begin{equation}\label{eq:La}
\OpL^\ua \SymOp_\ua
=\OpL^\ua \SymOp_\ua^{\ia\ib}v_\ia v_\ib.
\end{equation}

One crucial way in which $\CurlyR$ appears in the analysis of whether null geodesics fall into the black hole, asymptote to an orbiting null geodesic, or escape to infinity. Equation \eqref{eq:InverseMetricWithCurlyR} can be contracted with $v_\alpha v_\beta$ to derive an ODE for $\di r/\di \lambda$. One consequence of this is that a null geodesic has a turning point, where $\di r/\di \lambda$ vanishes, only when $\CurlyR=0$. Furthermore, by a standard dynamical systems analysis of one-dimensional systems, there can only be a trajectory remaining at fixed $r$ when $\CurlyR=0=\partial_r\CurlyR$. 

Following \cite{AnderssonBlue:KerrWave}, in the remainder of the paper, it is useful to consider double-indexed collections of vector fields $X^{\ia\ua\ub}$. These are sometimes called $2$-symmetry-strengthened vector fields. The same notation is also used for stress-energy tensors, cf. Section \ref{sec:stressenergy}.

\subsection{The stress-energy tensor and symmetries of the Vlasov equation}\label{sec:stressenergy}
Throughout this subsection, let $(\mathcal{M},g)$ be a globally
hyperbolic, Lorentzian manifold of dimension $3+1$. Consider the vector bundle
$\mathcal{V}=T\mathcal{M}$, and consider $\fdcone$. 

The volume element on $\fdconex$ induced from the volume element $ \di \mu_{T_x \mathcal{M}} = (-\det \metricg)^{1/2} \di v^0 \wedge\ldots\wedge \di v^3$ is given by the Gelfand-Leray form \cite[Chapter 7]{MR2919697} of $\di \mu_{T_x \mathcal{M}}$ with respect to $-L=-\frac12\metricg(v,v)$, restricted to $\fdconex$. That is $\di \mu_{T_x\mathcal{M}} = \di L \wedge \diVolCone$, where $\di L$ is the exterior derivative of $L$ calculated on $T_x \mathcal{M}$. In local coordinates $(x^a)$ with $a$ taking values $0, \dots, 3$, this takes the (not unique) form 
\begin{align*}
\diVolCone = \sqrt{|\metricg|}\frac{\di v^1 \wedge \di v^2 \wedge \di v^3}{(-v_0)}.
\end{align*}
This can also be found as an appropriate limit of $i_X \left(\di \mu_{T_x\mathcal{M}}\right)$ with $X^\ia =(-\metricg(v,v))^{-1} v^\ia$ on the hypersurface $\{ v: \metricg(v,v)= - m^2,\, \text{and}\,v\, \text{future directed}\}$ as $m\rightarrow 0^+$.

In the particular case of the Kerr spacetime, recall $\sqrt{|\metricg|}=\Sigma\sin\theta$ is uniformly equivalent to $r^2\sin\theta$. For $r$ large, $v_0$ is negative on $\fdconex$. For $r$ near $r_+$, where $\partial_t$ ceases to be timelike, the situation is more complicated. If $v_0<0$, then $(r,\theta,\phi)$ have the standard orientation, and if $v_0>0$, then $(v^r,v^\theta,v^\phi)$ have the reverse orientation. $v_0=0$ only occurs on a set of codimension $1$. Therefore, when integrating $\diVolCone$ over $\fdconex$ using the  orientation induced by $(v^r,v^\theta,v^\phi)$, one always has $|v_0|^{-1}$ in the denominator. As a consequence, integrals with respect to $\diVolCone$ on $\fdconex$ with the orientation induced by a future-directed normal are uniformally equivalent to   integrals over $\fdconex$ computed in the $(v^r,v^\theta,v^\phi)$ coordinates using the measure $\diVolModelv$.

Recall the following (see \cite{livingreview}):
\begin{definition}
\label{def:StressEnergy}
The Vlasov stress-energy
tensor is defined to be 
\begin{align*}
\EMS_{\ia\ib}[f]_x
&= \int_{\fdconex} f(x,v) v_\ia v_\ib \diVolCone.
\end{align*}
\end{definition}
For the remainder of this paper, the term ``stress-energy tensor''
will refer to the Vlasov stress-energy tensor. The Vlasov stress-energy tensor is symmetric,
traceless, and divergence-free for the massless Vlasov equation. If
$f$ is non-negative, the stress-energy tensor satisfies the dominant
energy condition.

Killing tensors play a crucial role in understanding the symmetries of
the Vlasov equation. Recall $K_{\ia_1\ldots\ia_n}$ is a conformal
Killing tensor if, for some $n\in\Naturals$, 
there is a tensor
field $p_{\ia_1\ldots\ia_{n-1}}$ such that
\begin{align*}
K_{\ia_1\ldots\ia_n}&=K_{(\ia_1\ldots\ia_n)}, \\
\nabla_{(\ib}K_{\ia_1\ldots\ia_n)}&=g_{(\ib\ia_1}p_{\ia_2\ldots\ia_n)}.
\end{align*}
 If $K$ is a conformal Killing tensor, then there are several relevant and
well-known consequences. $K_{\ia_1\ldots\ia_n}\dot\gamma^{\ia_1}\ldots\dot\gamma^{\ia_n}$
is constant along any null geodesic $\gamma$. On $T\mathcal{M}$, the function
$(x,v)\mapsto K_{\ia_1\ldots\ia_n}v^{\ia_1}\ldots v^{\ia_n}$ is a
solution of the Vlasov equation \eqref{eq:Vlasov}. Hence, its
restriction to $\fdcone$ satisfies the massless Vlasov
equation. This is a well-known property of the Vlasov fields, which  has already been exploited in \cite[Section 2.8]{fjs15}. From this the following follows by direct calculation. 
\begin{lemma}
If $K_{\ia_1\ldots\ia_n}$ is a conformal Killing tensor, then the map 
\begin{align*}
f(x,v)\mapsto K_{\ia_1\ldots\ia_n}v^{\ia_1}\ldots v^{\ia_n}f(x,v)
\end{align*} 
is a symmetry of the null Vlasov equation, in the sense that if $f(x,v)$ is a solution of the Vlasov equation, then so is 
$K_{\ia_1\ldots\ia_n}v^{\ia_1}\ldots v^{\ia_n}f(x,v)$.
\end{lemma}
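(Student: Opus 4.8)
The plan is to use that the geodesic spray $\mathcal{X}$ is a first-order differential operator on $T\mathcal{M}$, hence a derivation obeying the Leibniz rule, together with the fact, recorded above, that $\mathcal{X}$ is tangent to $\fdcone$ (because $\mathcal{X}L=0$ with $L=\tfrac12\metricg(v,v)$). Write $\Phi(x,v):=K_{\ia_1\ldots\ia_n}v^{\ia_1}\ldots v^{\ia_n}$.

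First I would verify directly that $\mathcal{X}\Phi=0$ on $\fdcone$. Using the coordinate expression for $\mathcal{X}$ and the symmetry of $K$, the partial derivative in $x$ together with the Christoffel terms reorganize into the covariant derivative of $K$, giving the identity
\[
\mathcal{X}\Phi=\left(\nabla_{(\ib}K_{\ia_1\ldots\ia_n)}\right)v^\ib v^{\ia_1}\ldots v^{\ia_n}
\]
on all of $T\mathcal{M}$, where the symmetrization is free because the velocity factors are totally symmetric. Inserting the conformal Killing condition $\nabla_{(\ib}K_{\ia_1\ldots\ia_n)}=\metricg_{(\ib\ia_1}p_{\ia_2\ldots\ia_n)}$ and again using the symmetry of the velocities collapses the symmetrization and yields
\[
\mathcal{X}\Phi=\metricg(v,v)\,p_{\ia_2\ldots\ia_n}v^{\ia_2}\ldots v^{\ia_n}.
\]
Since $\metricg(v,v)=0$ on $\fdcone$, the restriction of $\mathcal{X}\Phi$ to $\fdcone$ vanishes. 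This reproduces the remark preceding the lemma and makes transparent why the conformal (rather than the strict) Killing property is exactly what is needed: the defect is proportional to $\metricg(v,v)$, which is annihilated on the null cone.

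Then, for any solution $f$ with $\mathcal{X}f=0$, the Leibniz rule gives
\[
\mathcal{X}(\Phi f)=(\mathcal{X}\Phi)f+\Phi(\mathcal{X}f)=0
\]
on $\fdcone$, so $\Phi f=K_{\ia_1\ldots\ia_n}v^{\ia_1}\ldots v^{\ia_n}f$ is again a solution, which is the assertion.

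I do not expect any genuine obstacle once the preceding remark is available; the single point deserving care is the restriction to $\fdcone$. For the massless equation $f$ is defined only on $\fdcone$, and $\mathcal{X}f$ is meaningful precisely because $\mathcal{X}$ is tangent to $\fdcone$; on $\fdcone$ the spray restricts to a genuine vector field, hence remains a derivation, so the product rule applies to the restricted functions. Although $\mathcal{X}\Phi$ need not vanish on all of $T\mathcal{M}$, only its vanishing on $\fdcone$ is used, and that is exactly what the computation above provides.
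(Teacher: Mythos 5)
Your proof is correct and follows essentially the same route as the paper, which states the lemma ``follows by direct calculation'' from the preceding remark; you have simply written out that calculation, namely $\mathcal{X}\Phi=\bigl(\nabla_{(\ib}K_{\ia_1\ldots\ia_n)}\bigr)v^{\ib}v^{\ia_1}\ldots v^{\ia_n}=\metricg(v,v)\,p_{\ia_2\ldots\ia_n}v^{\ia_2}\ldots v^{\ia_n}$, which vanishes on $\fdcone$, followed by the Leibniz rule. Your closing observation is in fact slightly more careful than the paper's remark, which asserts $\Phi$ solves the Vlasov equation on all of $T\mathcal{M}$ --- true for a genuine Killing tensor but, as your computation shows, only on the null cone in the conformal case, which is all the lemma needs.
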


\begin{lemma}
Let $n\in\Naturals$, and $\{\SymOp_\ua\}_{\ua}$ be a collection of symmetries for the Vlasov equation of the form $(K_\ua)_{\ia_1\ldots\ia_{N(\ua)}}v^{\ia_1}\ldots v^{\ia_{N(\ua)}}$. The stress-energy tensor defined by
\begin{align*}
\EMS_{\ia\ib\ua_1\ldots\ua_n}[f]_\ptx
&= \int_{\fdconex} \SymOp_{\ua_n}\ldots\SymOp_{\ua_1} f(\ptx,v) v_\ia v_\ib \diVolCone 
\end{align*}
satisfies 
\begin{enumerate}
\item (symmetry) $\EMS_{\ia\ib\ua_1\ldots\ua_n}[f]=\EMS_{\ib\ia\ua_1\ldots\ua_n}[f]$, 
\item (trace-free) $\EMS{}^\ia{}_{\ia\ua_1\ldots\ua_n}[f]=0$, 
and, 
\item (divergence-free) if $f$ is a solution of the Vlasov equation, $\nabla^\ia\EMS_{\ia\ib\ua_1\ldots\ua_n}[f]=0$.

\item (dominant energy condition) Furthermore, suppose $Y^\ia$ is a future-directed causal vector, and
suppose that $X^{\ia\ua_1\ldots\ua_n}$ is such that for any set of
real numbers $\{\sigma_\ua\}_\ua$, the vector
$X^{\ia\ua_1\ldots\ua_n}\sigma_{\ua_1}\ldots\sigma_{\ua_n}$ is a future-directed causal vector. In this
case, if $f$ is nonnegative, then
$\EMS_{\ia\ib\ua_1\ldots\ua_n}[f]X^{\ia\ua_1\ldots\ua_n}Y^{\ib}\geq 0$. 
\end{enumerate}
\end{lemma}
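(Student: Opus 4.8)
The plan is to reduce the first three assertions to the already-recorded properties of the ordinary Vlasov stress-energy tensor of Definition \ref{def:StressEnergy}, and to treat the dominant energy condition by a pointwise computation in the fibre. For each fixed multi-index $(\ua_1,\ldots,\ua_n)$, introduce the scalar-rescaled distribution
\[
\tilde f(\ptx,v)=\SymOp_{\ua_n}(v)\cdots\SymOp_{\ua_1}(v)\,f(\ptx,v),
\]
so that, directly from the definition, $\EMS_{\ia\ib\ua_1\ldots\ua_n}[f]=\EMS_{\ia\ib}[\tilde f]$ is nothing but the ordinary stress-energy tensor of $\tilde f$. With this reduction, property (1) is immediate, since the factor $v_\ia v_\ib$ in the integrand is symmetric in $\ia,\ib$; and property (2) is equally immediate, because the trace is $\int_{\fdconex}\tilde f\,g^{\ia\ib}v_\ia v_\ib\,\diVolCone=\int_{\fdconex}\tilde f\,\metricg(v,v)\,\diVolCone=0$, every $v\in\fdconex$ being null. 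Note neither of these uses that $f$ solves the Vlasov equation.

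For property (3) I would first observe that each multiplication operator $f\mapsto\SymOp_\ua f$ preserves solutions of the Vlasov equation, this being exactly the hypothesis that the $\SymOp_\ua$ form a collection of symmetries. Since these are scalar multiplications they commute, and composing them $n$ times shows that $\tilde f$ is again a solution of the Vlasov equation. The standard divergence-free property of the Vlasov stress-energy tensor, recalled immediately after Definition \ref{def:StressEnergy}, then gives $\nabla^\ia\EMS_{\ia\ib}[\tilde f]=\nabla^\ia\EMS_{\ia\ib\ua_1\ldots\ua_n}[f]=0$.

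The one statement that does not pass through the single-function reduction is the dominant energy condition (4), because the contraction with $X^{\ia\ua_1\ldots\ua_n}$ couples all the $\ua$-indices across the product of symmetries. Here the idea is to carry out the $\ua$-summation pointwise under the integral. Expanding the contraction gives
\[
\EMS_{\ia\ib\ua_1\ldots\ua_n}[f]\,X^{\ia\ua_1\ldots\ua_n}Y^\ib
=\int_{\fdconex} f(\ptx,v)\,(W^\ia v_\ia)(Y^\ib v_\ib)\,\diVolCone,
\qquad
W^\ia=X^{\ia\ua_1\ldots\ua_n}\SymOp_{\ua_1}(v)\cdots\SymOp_{\ua_n}(v).
\]
The crucial point is that, for fixed $v$, choosing the real numbers $\sigma_\ua=\SymOp_\ua(v)$ the hypothesis on $X$ guarantees that $W=W(v)$ is a future-directed causal vector. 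Since $v\in\fdconex$ is future-directed null while $W$ and $Y$ are future-directed causal, the inner products $W^\ia v_\ia=\metricg(W,v)$ and $Y^\ib v_\ib=\metricg(Y,v)$ are each non-positive (in signature $(-,+,+,+)$ the pairing of two future-directed causal vectors is $\leq0$), so their product is non-negative. Combined with $f\geq0$ this makes the integrand non-negative, and hence the integral, establishing (4).

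I expect the substantive content to lie entirely in the observation underlying (4): that the assumed causality condition on $X$ is precisely what is needed so that the fibre-wise contracted vector $W(v)$ lands in the future causal cone for every $v$, after which the sign bookkeeping is routine. The remaining care is purely technical, namely ensuring $\tilde f$ retains enough smoothness and fall-off for the integrals to converge and for the divergence identity invoked in (3) to be valid, and keeping the causal sign conventions consistent with the orientation of $\fdconex$ discussed above.
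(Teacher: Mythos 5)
Your proposal is correct, and for properties (1)--(3) it coincides in substance with the paper's argument: the paper proceeds by induction on the concomitants $\EMS_{\ia\ib\ua_1\ldots\ua_k}[b]=\EMS_{\ia\ib\ua_1\ldots\ua_{k-1}}[\SymOp_{\ua_k}b]$, which is exactly your one-shot reduction $\EMS_{\ia\ib\ua_1\ldots\ua_n}[f]=\EMS_{\ia\ib}[\tilde f]$ with $\tilde f=\SymOp_{\ua_n}\cdots\SymOp_{\ua_1}f$, merely unwound one symmetry at a time. For the dominant energy condition (4), however, you take a genuinely different route. The paper argues by contradiction: it locates a velocity $w$ where the integrand is negative, freezes the values $\sigma_\ua=\SymOp_\ua(\ptx,w)$, localizes with a bump function $\chi$ supported in a small neighbourhood of $w$, and then invokes the dominant energy condition of the un-strengthened tensor $\EMS_{\ia\ib}$ applied to $\chi f$ with the fixed causal vector $X^{\ia\ua_1\ldots\ua_n}\sigma_{\ua_1}\cdots\sigma_{\ua_n}$. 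You instead verify the sign pointwise in the fibre: for each $v\in\fdconex$ the choice $\sigma_\ua=\SymOp_\ua(v)$ makes $W(v)=X^{\ia\ua_1\ldots\ua_n}\SymOp_{\ua_1}(v)\cdots\SymOp_{\ua_n}(v)$ future-directed causal, whence $(W^\ia v_\ia)(Y^\ib v_\ib)\geq 0$ and the integrand is non-negative before any integration. Your version is shorter, needs only $f\geq 0$ and integrability rather than the continuity of $f$ and of the $\SymOp_\ua$ that the paper's localization requires, and it sidesteps the delicate step around \eqref{eq:ContradictionForTwoSymmetryStrengthenedEMS}, where replacing the $v$-dependent symmetry factors by their frozen values at $w$ is only justified after the neighbourhood of $w$ is shrunk appropriately; the paper's approach, in exchange, fits its uniform pattern of bootstrapping every property of the strengthened tensor from the corresponding property of $\EMS_{\ia\ib}$, treating the base dominant energy condition as a black box, whereas you reprove the elementary fact that the pairing of future-directed causal vectors is non-positive. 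One trivial remark: the commutativity of the scalar multiplications you mention in (3) is unnecessary --- any ordered composition of symmetries preserves solutions --- but it is harmless.
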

\begin{proof} 
For any sequence of values for $\ua_1,\ldots,\ua_n$, consider the
sequence of concomitants defined by
$\EMS_{\ia\ib\ua_1\ldots\ua_k}[b]=\EMS_{\ia\ib\ua_1\ldots\ua_{k-1}}[\SymOp_{\ua_{k}}
  b]$. Since $\EMS_{\ia\ib}$ is symmetric and is trace-free, the
$\EMS_{\ia\ib\ua_1\ldots\ua_k}$ have the same property by
induction. Similarly, since each $\SymOp_{\ua_k}$ is a symmetry, the
$\EMS_{\ia\ib\ua_1\ldots\ua_k}$ is divergence-free for the Vlasov
equation by induction. 

Suppose the dominant energy condition fails for $\EMS_{\ia\ib\ua_1\ldots\ua_n}$. Thus, there is some smooth $f:\fdcone\rightarrow[0,\infty)$, an $\ptx\in\mathcal{M}$, and $X^{\ia\ua_1\ldots\ua_n}$ and $Y^\ia$ as in the statement of the theorem such that
\begin{align*}
\int_{\fdconex} \SymOp_{\ua_n}\ldots\SymOp_{\ua_1} f(\ptx,v) v_\ia v_\ib X^{\ia\ua_1\ldots\ua_n} Y^\ib \diVolCone 
< 0 .
\end{align*}
Thus, there is a $w\in T_\ptx\mathcal{M}$ such that $\SymOp_{\ua_n}\ldots\SymOp_{\ua_1} f(\ptx,w) w_\ia w_\ib X^{\ia\ua_1\ldots\ua_n} Y^\ib <0$. Let $\sigma_{\ua}$ be the value of $\SymOp_{\ua}$ at $(\ptx,w)$. (Since the $\SymOp_\ua$ are assumed to be multiplicative symmetry operators depending on $(\ptx,v)$, this is possible.) Since the $S_\ua$ are polynomial in the $v^a$, they are continuous in $\fdconex$. Thus, there is an open neighbourhood $W$ of $w$ in $\fdconex$ such that $\SymOp_{\ua_n}\ldots\SymOp_{\ua_1} f(\ptx,v) v_\ia v_\ib X^{\ia\ua_1\ldots\ua_n} Y^\ib <\SymOp_{\ua_n}\ldots\SymOp_{\ua_1} f(\ptx,w) w_\ia w_\ib X^{\ia\ua_1\ldots\ua_n} Y^\ib/2 <0$. Let $\chi$ be a smooth function on $\fdconex$ that is one on an open neighbourhood $W'$ of $w$ and that is supported in $W$. Thus, $f \chi$ is a non-negative function on $\fdconex$ and 
\begin{align}
0
&>\int_{\fdconex}  (\chi(v)f(\ptx,v)) v_\ia v_\ib (X^{\ia\ua_1\ldots\ua_n}\sigma_{\ua_n}\ldots\sigma_{\ua_1}) Y^\ib \diVolCone \nonumber\\
&>\left(\int_{\fdconex}  (\chi(v)f(\ptx,v)) v_\ia v_\ib  \diVolCone\right) (X^{\ia\ua_1\ldots\ua_n}\sigma_{\ua_n}\ldots\sigma_{\ua_1}) Y^\ib
\label{eq:ContradictionForTwoSymmetryStrengthenedEMS}
\end{align}
Since $X^{\ia\ua_1\ldots\ua_n}\sigma_{\ua_1}\ldots\sigma_{\ua_n}$ and $Y^{\ia}$ are timelike and future-directed vector fields, and $\EMS_{\ia\ib}$ satisfies the dominant energy condition, it follows that the final term in inequality \eqref{eq:ContradictionForTwoSymmetryStrengthenedEMS} must be nonnegative, which contradicts inequality \eqref{eq:ContradictionForTwoSymmetryStrengthenedEMS}. Thus, by contradiction, $\EMS_{\ia\ib\ua_1\ldots\ua_n}$ must satisfy the dominant energy condition. 
\end{proof}

One concludes this section by the standard conservation of energies for Vlasov fields. Let$\{\SymOp_\ua\}_{\ua}$ be a collection of symmetries, $X^{a\ua_1\ldots\ua_k}$ a collection of vector fields, and $\Sigma$ be a spacelike hypersurface. The energy of $f$ with respect to the vector $X$ on the hypersurface $\Sigma$ is
\begin{align*}
\GenEnergy{X}[f](\Sigma)
&= \int_\Sigma \EMS_{\ia\ib\ua_1\ldots\ua_k}[f]
X^{\ia\ua_1\ldots\ua_k} 
\diNormal_{\Sigma}^{\ib} ,
\end{align*}

Let now $\Sigma_1, \Sigma_2$ be
hypersurfaces and $R$ be an open set such that $\partial
R=\Sigma_2-\Sigma_1$.  The following lemma states the conservation of energies of Vlasov fields:
\begin{lemma} Let $\Omega$ be a positive function on $\mathcal{M}$, and $q^{\ua_1\ldots\ua_k}$ be a collection of functions on $\mathcal{M}$. The following identity holds:
\begin{align*}
\GenEnergy{X}[f](\Sigma_2)
-\GenEnergy{X}[f](\Sigma_1)
=  \int_R \GenBulk{X,\Omega,q}[f](R)\diVol,
\end{align*}
where
\begin{gather*}
\GenBulk{X,\Omega,q}[f]
= - \frac12 \Omega^2 \EMS_{\ia\ib\ua_1\ldots\ua_k}[f] 
\Lie_{X^{\ua_{1}\ldots\ua_{k}}}(\Omega^{-2} g^{\ia\ib}) 
+\EMS_{\ia\ib\ua_1\ldots\ua_k} \metricg^{\ia\ib}[f]q^{\ua_{1}\ldots\ua_{k}}.
\end{gather*}
\end{lemma}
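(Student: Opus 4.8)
The plan is to read this as the standard energy identity of the vector-field method, in which a divergence is integrated over $R$ and converted to boundary fluxes by the divergence theorem. Holding the symmetry labels $\ua_1\ldots\ua_k$ fixed and summing over them with the Einstein convention, I would introduce the energy current $J_\ib = \EMS_{\ia\ib\ua_1\ldots\ua_k}[f]\,X^{\ia\ua_1\ldots\ua_k}$, whose flux through a spacelike hypersurface $\Sigma$ against $\diNormal_\Sigma^\ib$ is by definition $\GenEnergy{X}[f](\Sigma)$. Since $\partial R=\Sigma_2-\Sigma_1$, the divergence theorem gives $\GenEnergy{X}[f](\Sigma_2)-\GenEnergy{X}[f](\Sigma_1)=\int_R \nabla^\ib J_\ib\,\diVol$, so the entire lemma reduces to the pointwise identity $\nabla^\ib J_\ib=\GenBulk{X,\Omega,q}[f]$.

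First I would compute the divergence by the Leibniz rule, so that $\nabla^\ib J_\ib=(\nabla^\ib \EMS_{\ia\ib\ua_1\ldots\ua_k})X^{\ia\ua_1\ldots\ua_k}+\EMS_{\ia\ib\ua_1\ldots\ua_k}\nabla^\ib X^{\ia\ua_1\ldots\ua_k}$. By the preceding lemma, for each fixed choice of the $\ua$-labels the tensor $\EMS_{\ia\ib\ua_1\ldots\ua_k}[f]$ is divergence-free whenever $f$ solves the Vlasov equation, so the first term vanishes identically. Because the same tensor is symmetric in $\ia\ib$, only the symmetrised derivative survives in the second, leaving $\nabla^\ib J_\ib=\EMS_{\ia\ib\ua_1\ldots\ua_k}[f]\,\nabla^{(\ib}X^{\ia)\ua_1\ldots\ua_k}$.

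Next I would rewrite this symmetrised derivative in the Lie-derivative form of the statement. For each fixed value of the labels, $\Lie_{X}(\Omega^{-2}g^{\ia\ib})=(X\Omega^{-2})\,g^{\ia\ib}-2\Omega^{-2}\nabla^{(\ia}X^{\ib)}$; contracting this against $-\half\Omega^2\EMS_{\ia\ib\ua_1\ldots\ua_k}[f]$ reproduces exactly $\EMS_{\ia\ib\ua_1\ldots\ua_k}[f]\,\nabla^{(\ia}X^{\ib)\ua_1\ldots\ua_k}$ plus a term proportional to the trace $\EMS_{\ia\ib\ua_1\ldots\ua_k}[f]\,g^{\ia\ib}$. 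By the trace-free property of the preceding lemma this trace is zero, so the conformal factor $\Omega$ disappears from the left-hand side and may be chosen freely; the same tracelessness makes the final $q$-dependent term $\EMS_{\ia\ib\ua_1\ldots\ua_k}[f]\,g^{\ia\ib}q^{\ua_1\ldots\ua_k}$ vanish identically, which is precisely why it may be added without altering the identity. Assembling these pieces yields $\nabla^\ib J_\ib=\GenBulk{X,\Omega,q}[f]$, and the divergence theorem then completes the proof.

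The computation is essentially routine, so the only points that need care are bookkeeping. The labels $\ua_1\ldots\ua_k$ must be held fixed while the symmetry, trace-free, and divergence-free properties of the preceding lemma are invoked, and then summed back in, and $\Lie_{X^{\ua_1\ldots\ua_k}}$ must be read as differentiation along the vector field attached to each fixed value of those labels. I would also verify that the orientation conventions encoded in $\partial R=\Sigma_2-\Sigma_1$ and in $\diNormal_\Sigma^\ib$ are consistent, so that the two boundary fluxes assemble with the correct signs into $\GenEnergy{X}[f](\Sigma_2)-\GenEnergy{X}[f](\Sigma_1)$. The main conceptual content, rather than any genuine difficulty, is the observation that the trace-free property of the Vlasov stress-energy tensor renders the identity independent of both $\Omega$ and $q$, which is exactly the gauge freedom exploited later when the bulk term is turned into a useful positive quantity.
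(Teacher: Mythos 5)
Your proposal is correct and follows exactly the route the paper intends: its one-line proof (``a straightforward consequence of the fact that $\EMS_{\ia\ib\ua_1\ldots\ua_k}[f]$ is traceless and divergence free'') is precisely your computation --- divergence theorem applied to the current $\EMS_{\ia\ib\ua_1\ldots\ua_k}[f]X^{\ia\ua_1\ldots\ua_k}$, the divergence-free property killing the first Leibniz term, and tracelessness eliminating both the $(X\Omega^{-2})g^{\ia\ib}$ piece of the Lie derivative and the $q$-term. You have simply written out in full what the paper leaves implicit, including the correct observation that tracelessness is the gauge freedom making the identity independent of $\Omega$ and $q$.
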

\begin{remark} Later in Section \ref{sec:morawetz}, to simplify the notations, $\GenBulk{X,\Omega,q}[f]$ is sometimes denoted $\Pi_{X}$, since $\Omega$ and $q$ are clear from context.
\end{remark}
\begin{proof} The proof is a straightforward consequence of the fact that $\EMS_{\ia\ib\ua_1\ldots\ua_k}[f]$ is traceless and divergence free.
\end{proof}

\section{The bounded-energy estimate}\label{sec:morawetz}

The essential parts of the proof are to construct
$2$-symmetry-strengthened vector fields $\vecTBlendBF$ and
$\vecMorawetz$ such that
\begin{subequations}
\label{eq:CoreEstimates}
\begin{align}
\GenEnergy{\vecTBlendBF}&\geq 0, \label{prop1}\\
\GenBulk{\vecMorawetz}  &\geq 0, \label{prop2}\\\
\GenBulk{\vecTBlendBF}  &\lesssim \frac{|a|}{M}\GenBulk{\vecMorawetz} ,\label{prop3}\\\
\GenEnergy{\vecTBlendBF}&\gtrsim |\GenEnergy{\vecMorawetz}| .\label{prop4}\
\end{align}
\end{subequations}
A simple bootstrap argument then shows, for sufficiently small $|a|/M$, that $\GenEnergy{\vecTBlendBF}$ is uniformly bounded by its initial value and that the spacetime integral of $\GenBulk{\vecMorawetz}$ is bounded by a multiple of $\GenEnergy{\vecTBlendBF}$ at any time. 

Of the properties above, the first property \eqref{prop1} is ensured by taking
$\vecTBlendBF$ to be future-directed and causal. The second property \eqref{prop2} is
ensured finding $\vecMorawetz$ (together with a conformal factor and a
collection of auxiliary function $\fnq$) such that (suppressing symmetry indices for simplicity)
\begin{align*}
\Omega^{-2}\GenBulk{\vecMorawetz,\Omega,\fnq}=\left( -\frac12 \Lie_\vecMorawetz(\Omega^{-2}g^{\ia\ib})-\fnq
\Omega^{-2}g^{\ia\ib}\right) \EMS_{\ia\ib} 
\end{align*}
is non-negative. The remaining two properties \eqref{prop3}-\eqref{prop4}
are quantitative, allowing one term to be dominated, rather than
qualitative, merely requiring a term to be signed, and so they are
more complicated. However, if $\vecTBlendBF$ were Killing, then the
associated bulk term would vanish, and the third condition \eqref{prop3} would hold
trivially; since the Kerr exterior has no globally Killing, causal
vector, we instead construct an approximately Killing vector field,
with $|a|/M$ being a measure of the failure of $\vecTBlendBF$ to be
Killing, so that the third condition \eqref{prop3} holds. The fourth condition \eqref{prop4} holds
from the dominant energy condition, as long as $\vecMorawetz$ can be
chosen to have a length bounded by the length of
$\vecTBlendBF$. 

Ideally, one would construct $\vecMorawetz$ and $\vecTBlendBF$ that
are vector fields, but, following \cite{AnderssonBlue:KerrWave}, we
take them to be $2$-symmetry-strengthened vector fields. The Kerr
spacetime has orbiting null geodesics, which we define to be ones
which neither are absorbed through the event horizon nor escape to
null infinity. The projection of such geodesics to $\Sigma_t$ fills an
open set in the Kerr spacetime, but not its tangent space. Because of
the presence of orbiting null geodesics in an open set, it is not
possible to find a vector field $\vecMorawetz$ such that
$\left(\Lie_\vecMorawetz(\Omega^{-2}g^{\ia\ib})-\fnq \Omega^{-2}g^{\ia\ib}\right)\EMS_{\ia\ib}$
is non-negative; however, although \cite{AnderssonBlue:KerrWave}
doesn't use the terminology introduced in this paper, it introduced
$2$-symmetry-strengthened vector fields that, with the energies and
bulk terms for the wave equation, satisfy the four conditions
\eqref{eq:CoreEstimates}. Most of the rest of this paper consists of
constructing these $2$-symmetry-strengthended vector fields and
demonstrating they have the desired properties. In
\cite{AnderssonBlue:KerrWave}, it was important to work with
$2$-symmetry-strengthened vector fields so that the quadratic
stress-energy tensor for the wave equation could be written as a
bilinear quantity. In this paper, it is again convenient to work with
$2$-symmetry-strengthened vector fields, so that we can more easily
define the notion of a causal $2$-symmetry-strengthened vector field. 

The calculations in this paper are significantly simpler than in
\cite{AnderssonBlue:KerrWave}. Both papers rely on properties of null
geodesics and on the fact that, for a geodesic $\gamma$ with the energies and bulk terms defined by
$\GenEnergy{X}[\gamma]=\dot\gamma^\ia X_\ia$ and
$\GenBulk{X}[\gamma]=\nabla_{\ops\ia}X_{\ib\cls}
\dot\gamma^\ia\dot\gamma^\ib$, the estimates
\eqref{prop1}-\eqref{prop4} are valid. The calculations in this paper
are relatively quick, since the behaviour of null geodesics completely
determines the behaviour of solutions to the Vlasov equation. In
contrast, solutions of the wave equation are only accurately modelled
by null geodesics in the high-frequency limit. To treat the wave
equation in \cite{AnderssonBlue:KerrWave}, a significant amount of
additional work is required to show that a similar method can be used
uniformly without a frequency decomposition.

\subsection{The blended energy}\label{sec:morawetz1}
In this subsection, we construct a causal $2$-symmetry-strengthened
vector field. 

\begin{definition}
Let
\begin{align*}
\vecTperp &= \left(\pt+\frac{2aMr}{\KPi} \pp\right)^\ia =  \left(\pt+\omega_{\perp} \pp\right)^\ia  ,\\
\vecTBlend^\ia &=(\pt +\chi\omega_{\mathcal{H}}\pp)^\ia, \\
\vecTBlendBF^{\ia\ua\ub}&= \vecTBlend^\ia  \delta^{\ua\ub} ,
\end{align*}
where $\omega_{\mathcal{H}}=a/(r_+^2+a^2)$ is the rotation speed of the horizon, $\chi=\chi(r)$ is a function that is $1$ for $r<r_\chi$, smoothly decreasing on $r\in[r_\chi,r_\chi+M]$, and identically $0$ for $r>r_\chi+M$, and where $r_\chi$ is chosen sufficiently large. For simplicity, we take $r_\chi=10M$. 
\end{definition}

\begin{lemma}
\label{lem:EnergyBounds}
There is a positive constant $\bar{\epsilon}$ such that if $|a|<\bar{\epsilon}M$, $t\in\Reals$, and $f:\fdcone\rightarrow[0,\infty)$ is continuous, then
\begin{align}
\GenEnergy{\vecTperp}[f](\Sigma_t)
&\simeq \int_{\Sigma_t}\int_{\fdconex} 
\left(\frac{(r^2+a^2)^2}{\KDelta}v_t^2 +\KDelta v_r^2 +\TensorQ^{\ia\ib}v_\ia v_\ib\right) f \diVolCone\diVolSigmat ,
\label{eq:TperpWithQ}\\
&\simeq \int_{\Sigma_t}\int_{\fdconex} 
\left(\frac{(r^2+a^2)^2}{\KDelta}v_t^2 +\KDelta v_r^2 +v_\theta^2+\frac{1}{\sin^2\theta}v_\phi^2 \right) f \diVolCone\diVolSigmat ,
\label{eq:TperpWithhp}\\
\GenEnergy{\vecTperp}[f](\Sigma_t)
&\simeq\GenEnergy{\vecTBlend}[f](\Sigma_t) 
\label{eq:TperpTBlendEquivalence},\\
&\simeq \EnergyModel .
\label{eq:TperpModelEquivalence}
\end{align}
Furthermore, if $f$ is a $C^1$ solution of the Vlasov equation, then
\begin{align}
\sqrt{\det\metricg}\left|-\frac12 \Omega^2 \EMS[f]_{\ia\ib} \Lie_{\vecTBlend}(\Omega^{-2}\metricg^{\ia\ib})\right|
&= \KDelta |\pr\chi| |v_r| |v_\phi| \sin\theta.
\label{eq:TBlendBulk}
\end{align}
\end{lemma}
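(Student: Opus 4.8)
The plan is to separate the chain of equivalences \eqref{eq:TperpWithQ}--\eqref{eq:TperpModelEquivalence}, which are analytic comparisons of quadratic forms on the cone, from the exact algebraic identity \eqref{eq:TBlendBulk}, and to treat them by different means. For the equivalences I would first unwind the flux. Since $\vecTperp$ is orthogonal to $\Sigma_t$ it is parallel to the future unit normal, so in $\GenEnergy{\vecTperp}[f](\Sigma_t)=\int_{\Sigma_t}\EMS_{\ia\ib}[f]\vecTperp^\ia\,\diNormal_{\Sigma_t}^\ib$ the contraction reduces to a positive multiple of $\EMS_{\ia\ib}[f]\vecTperp^\ia\vecTperp^\ib=\int_{\fdconex}f\,(v_\ia\vecTperp^\ia)^2\,\diVolCone$. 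Using $\metricg(\vecTperp,\vecTperp)=-\KDelta\KSigma/\KPi$ together with $\vecTperp_\ia\propto\di t$ gives the pointwise relation $v_\ia\vecTperp^\ia=v_t+\omegaperp v_\phi=\metricg(\vecTperp,\vecTperp)\,v^t$, so the energy density is a multiple of $(\KDelta\KSigma/\KPi)(v^t)^2$, a quadratic form in $(v_t,v_\phi)$ alone.

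The decisive step is to turn this into the stated forms. Contracting the inverse-metric identity \eqref{eq:InverseMetricWithCurlyR} with $v_\ia v_\ib$ and imposing the null constraint $\metricg(v,v)=0$, which holds on $\fdconex$, yields the on-shell relation $\KDelta^2 v_r^2=-\CurlyR$, trading $v_r^2$ for the combination $\CurlyR=\CurlyR^\ua\SymOp_\ua$ of $\GeodesicEnergy=v_t$, $\GeodesicLz=v_\phi$ and $\GeodesicQ=\TensorQ^{\ia\ib}v_\ia v_\ib$. I would use this to rewrite $(\KDelta\KSigma/\KPi)(v^t)^2$ as $\tfrac{(r^2+a^2)^2}{\KDelta}v_t^2+\KDelta v_r^2+\TensorQ^{\ia\ib}v_\ia v_\ib$, the discrepancies being cross terms and lower-order pieces each carrying a factor of $a$ and hence absorbed once $|a|<\bar\epsilon M$ (also using $\KSigma\simeq r^2$ and $\KPi\simeq(r^2+a^2)^2$); this is \eqref{eq:TperpWithQ}. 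For \eqref{eq:TperpWithhp} I would expand $\TensorQ^{\ia\ib}v_\ia v_\ib=v_\theta^2+\cot^2\theta\,v_\phi^2+a^2\sin^2\theta\,v_t^2$ and note $\sin^{-2}\theta\,v_\phi^2=v_\phi^2+\cot^2\theta\,v_\phi^2$, so the leftover $v_\phi^2$ and $a^2\sin^2\theta\,v_t^2$ are dominated by the $\sin^{-2}\theta\,v_\phi^2$ and $\tfrac{(r^2+a^2)^2}{\KDelta}v_t^2$ terms present. For \eqref{eq:TperpTBlendEquivalence} I would compare the currents of $\vecTperp$ and $\vecTBlend$ directly: both differ from $\pt$ by an $O(a)$ multiple of $\pp$ (namely $\omegaperp$ and $\chi\omegaH$), so for small $|a|$ both define positive-definite densities on $\fdconex$ agreeing up to $O(a)$. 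Finally \eqref{eq:TperpModelEquivalence} follows from the volume-form equivalences $\diVolCone\simeq\diVolModelv$ and $\diVolSigmat\simeq\diVolModelx$ of Section \ref{sec:stressenergy}, together with the fact that the two-symmetry strengthening underlying $\vecTBlendBF^{\ia\ua\ub}=\vecTBlend^\ia\delta^{\ua\ub}$ supplies precisely the weight $\sum_\ua\SymOp_\ua^2\simeq\big(M^2 v_t^2+v_\theta^2+\sin^{-2}\theta\,v_\phi^2\big)^2$, i.e. the factor $|f|_2/f$ defining $\EnergyModel$.

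The identity \eqref{eq:TBlendBulk} is an exact, off-shell computation (at the level of the phase-space density $v_\ia v_\ib$) and I expect it to be short. Writing $\Omega^{-2}\metricg^{\ia\ib}=\KSigma\metricg^{\ia\ib}=\KDelta\pr^\ia\pr^\ib+\KDelta^{-1}\CurlyR^{\ia\ib}$ and using that $\pt,\pp$ are Killing with $\KSigma,\KDelta,\CurlyR^{\ia\ib}$ independent of $t,\phi$, one has $\Lie_{\pt}(\KSigma\metricg^{\ia\ib})=\Lie_{\pp}(\KSigma\metricg^{\ia\ib})=0$. Hence in $\Lie_{\vecTBlend}(\KSigma\metricg^{\ia\ib})$ with $\vecTBlend=\pt+\chi\omegaH\pp$ only the derivative $\di(\chi\omegaH)=\omegaH(\pr\chi)\,\di r$ of the coefficient survives, giving $\Lie_{\vecTBlend}(\KSigma\metricg^{\ia\ib})=-\omegaH(\pr\chi)\big(\delta^\ia_\phi\,\KSigma\metricg^{r\ib}+\delta^\ib_\phi\,\KSigma\metricg^{\ia r}\big)$. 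Contracting with $v_\ia v_\ib$ and using the block structure $\KSigma\metricg^{r\ib}=\KDelta\,\delta^\ib_r$ from \eqref{eq:InverseMetricWithCurlyR} collapses this to $-2\omegaH(\pr\chi)\KDelta v_r v_\phi$; multiplying by $-\half\Omega^2=-\tfrac{1}{2\KSigma}$ and by $\sqrt{\det\metricg}=\KSigma\sin\theta$ cancels the $\KSigma$ factors, and taking absolute values gives $\omegaH\KDelta|\pr\chi|\,|v_r|\,|v_\phi|\sin\theta$. This reproduces the claimed density, the constant $\omegaH=a/(r_+^2+a^2)$ being the source of the $O(|a|)$ smallness that later feeds condition \eqref{prop3}.

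The main obstacle is the on-shell rewriting in the second paragraph: the flux naturally produces a form in $(v_t,v_\phi)$, whereas the target forms involve $v_r$, $v_\theta$ and $\TensorQ^{\ia\ib}v_\ia v_\ib$, so the null relation $\KDelta^2 v_r^2=-\CurlyR$ must mediate between them, and one must verify that every discrepancy is either genuinely lower order or carries a factor of $a$ controlled by $|a|<\bar\epsilon M$. By contrast, once the Killing cancellation is noticed, \eqref{eq:TBlendBulk} is immediate.
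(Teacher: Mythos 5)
Your architecture matches the paper's proof: reduce the flux to the density $\frac{\KPi}{\KDelta}(\vecTperp^\ia v_\ia)^2$, then use the null constraint to restore coercivity in $v_r,v_\theta,v_\phi$ (your substitution $\KDelta^2 v_r^2=-\CurlyR$ is algebraically identical to the paper's device of adding $\frac12\KSigma\metricg^{\ia\ib}v_\ia v_\ib=0$ via \eqref{eq:InverseMetricWithCurlyR}), absorb the discrepancies for small $|a|/M$, estimate the $\vecTperp$--$\vecTBlend$ difference, and obtain \eqref{eq:TBlendBulk} from the Killing cancellation in the Lie derivative. Your computation of the bulk identity is correct, and in fact more accurate than the displayed statement: the factor $\omegaH|\pr\chi|$ you produce is exactly what the closing argument of the paper invokes (``$\omegaH$ vanishes linearly in $a$''), so the absence of $\omegaH$ in \eqref{eq:TBlendBulk}, and in the formula $\Lie_{\vecTBlend}(\Omega^{-2}\metricg^{\ia\ib})=-2\KDelta\pr^{\ops\ia}\pp^{\ib\cls}$ in the paper's proof, is a typo that your derivation repairs.

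The genuine gap is in your smallness bookkeeping near the horizon. You propose to absorb every discrepancy because it ``carries a factor of $a$'', but each discrepancy also carries the weight $\KDelta^{-1}$, which is unbounded as $r\to r_+$, so an $O(a)$ coefficient alone does not close the estimate. Concretely: trading through the constraint around the bare square $v_t^2$ produces the cross term $\frac{4aMr}{\KDelta}v_t v_\phi$, and Cauchy--Schwarz against $\frac{(r^2+a^2)^2}{\KDelta}v_t^2+\frac{1}{\sin^2\theta}v_\phi^2$ leaves a coefficient of order $a^2M^2r^2/\bigl(\KDelta(r^2+a^2)^2\bigr)$ on $v_\phi^2$, which blows up at $r_+$. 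The paper's proof evades this by keeping $(\vecTperp^\ia v_\ia)^2$, not $v_t^2$, as the main square --- see line \eqref{eq:energyEquivalenceA} --- after which the residual coefficients $4aMr-2\omegaperp(r^2+a^2)^2=-4a^3Mr\KDelta\sin^2\theta/\KPi$ and $-a^2+(r^2+a^2)^2\omegaperp^2$ vanish \emph{at} $r=r_+$, i.e.\ carry the factor of $\KDelta$ needed to defeat $\KDelta^{-1}$; near the horizon $v_t^2$ and $(\vecTperp^\ia v_\ia)^2$ are genuinely not interchangeable (their ratio degenerates on null vectors with $v_t+\omegaH v_\phi$ small), so the grouping is not cosmetic. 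The same issue afflicts your argument for \eqref{eq:TperpTBlendEquivalence}: comparing $\vecTperp$ and $\vecTBlend$ each to $\pt$ involves the coefficients $\omegaperp$ and $\chi\omegaH$ separately, which do not vanish at $r_+$ (and $\pt$ is not even causal there, so neither comparison yields a signed density); the paper instead estimates the difference directly, whose coefficient $\omegaperp-\chi\omegaH$ vanishes linearly at $r_+$ precisely because $\chi\equiv1$ near the horizon and $\omegaperp\to\omegaH$ there, with $r^{-4}$ decay supplying integrability at infinity. So: right route and correct endgame, but the decisive structural fact --- that all error coefficients vanish at $r=r_+$, not merely that they are $O(a)$ --- is absent from your absorption step and must be supplied for the equivalence constants to be uniform in $|a|<\bar\epsilon M$.
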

\begin{proof}
This proof follows the argument of the proof for Lemma 3.1 of
\cite{AnderssonBlue:KerrWave}.

Let $\omegaperp$ denote $2aMr/\KPi$. Since the normal satisfies 
$\diNormal_{\Sigma_t}^\ia=\vecTperp^\ia(\KPi/\KDelta)\di
r\di\theta\di\phi$, and since
$-\metricg_{\ia\ib}\vecTperp^\ia\vecTperp^\ib=\KDelta\KSigma/\KPi$,
the $\vecTperp$ energy is
\begin{align*}
\GenEnergy{\vecTperp}
{}&{}=\int_{\Sigma_{t}}
\EMS_{\ia\ib}\vecTperp^{\ia}\vecTperp^{\ib}
\frac{\KPi}{\KDelta}\di r\di\theta\di\phi
=\int_{\Sigma_{t}}\int_{\fdconex}  f\left(\frac{\KPi}{\KDelta}(\vecTperp^\ia v_\ia)^2
+\frac12\KSigma\metricg^{\ia\ib}v_\ia v_\ib\right)
\diVolCone \di r\di\theta\di\phi .
\end{align*}
The integrand can be expanded as 
\begin{subequations}
\begin{align}
\frac{\KPi}{\KDelta}(\vecTperp^\ia
v_\ia)^2+\frac12\KSigma\metricg^{\ia\ib}v_\ia v_\ib
={}&{} \frac12\left(\KDelta(v_r)^2
+\frac{(r^2+a^2)^2}{\KDelta}(\vecTperp^\ia v_\ia)^2
+\TensorQ^{\ia\ib}v_\ia v_\ib +v_\phi^2\right) 
\label{eq:energyEquivalenceA}\\
{}&{}-\frac{1}{2\KDelta}\left(4aMr-2\omegaperp(r^2+a^2)^2\right)v_t
v_\phi 
\label{eq:energyEquivalenceB}\\
{}&{}+\frac{1}{2\KDelta}\left(-a^2+(r^2+a^2)^2\omegaperp^2\right)v_\phi^2
-a^2\sin^2\theta(\vecTperp^\ia v_\ia)^2 . 
\label{eq:energyEquivalenceC}
\end{align}
\end{subequations}
Since the coefficients $4aMr-2\omegaperp(r^2+a^2)^2$ and
$-a^2+(r^2+a^2)^2\omegaperp^2$ vanish at $r=r_+$, are bounded by
factors that go uniformly to $0$ on bounded sets as $a\rightarrow0$,
and grow as $r\rightarrow\infty$ no faster than $r$ and a constant
respectively, for $|a|$ sufficiently small, the terms in lines
\eqref{eq:energyEquivalenceB}-\eqref{eq:energyEquivalenceC} are dominated
by those on the right-hand side of line
\eqref{eq:energyEquivalenceA}. Thus, the terms on the left and right
side of line \eqref{eq:energyEquivalenceA} are equivalent. This proves
estimate \eqref{eq:TperpWithQ}. Estimate \eqref{eq:TperpWithhp} follows
from the equivalence
\begin{align*}
(\vecTperp^\ia v_\ia)^2 +\TensorQ^{\ia\ib} v_\ia v_\ib
&\simeq (\vecTperp^\ia v_\ia)^2 + v_\theta^2+\frac{1}{\sin^2\theta}v_\phi^2 .
\end{align*}

The $\vecTBlend$ energy can be estimated using the fact that $\vecTperp-\vecTBlend=(\omegaperp-\chi\omegaH)\partial_\phi$ is orthogonal to $\vecTperp$, so
\begin{align*}
\GenEnergy{\vecTperp}-\GenEnergy{\vecTBlend}
{}&{} =\int_{\Sigma_{t}}
(\omegaperp-\chi\omegaH)v_\phi(\vecTperp^\ia v_\ia) \frac{\KPi}{\KDelta} \diVolSigmat .
\end{align*}
The coefficient $\omegaperp-\chi\omegaH$ vanishes linearly at $r=r_+$,
is bounded by a function that goes to zero uniformly as $a\rightarrow
0$, and goes to zero as $r\rightarrow\infty$ like $r^{-4}$, so, by a
simple Cauchy-Schwarz estimate, one finds
$|\GenEnergy{\vecTperp}-\GenEnergy{\vecTBlend}|\lesssim |a|
\GenEnergy{\vecTperp}$, and
$\GenEnergy{\vecTperp}\simeq\GenEnergy{\vecTBlend}$. Finally,
$\GenEnergy{\vecTperp}$ and $\EnergyModel$ are equivalent, since, in
considering the integration on the cone,
$\sqrt{\text{det}\metricg}=\KSigma\sin\theta$ is uniformly equivalent
to $r^2\sin\theta$ for $a$ sufficiently small.

The contraction of the stress-energy tensor with the Lie derivative
can be calculated directly from
$\Lie_{\vecTBlend}(\Omega^{-2}\metricg^{\ia\ib})=-2\KDelta \partial_r^{\ops\ia}\partial_\phi^{\ib\cls}$. 
\end{proof}

\begin{corollary}
There is a positive constant $\bar{\epsilon}$ such that if $|a|\leq\bar{\epsilon}$, $t\in\Reals$, and $f:\fdcone\rightarrow[0,\infty)$ is continuous, then
\begin{align*}
\GenEnergy{\vecTBlendBF}[f](\Sigma(t))
&\simeq \EnergyModel[f](t). 
\end{align*}
\end{corollary}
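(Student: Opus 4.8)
The plan is to reduce the $2$-symmetry-strengthened energy $\GenEnergy{\vecTBlendBF}[f]$ to an ordinary $\vecTBlend$-energy of a single scalar multiple of $f$, then to invoke Lemma \ref{lem:EnergyBounds}, and finally to carry out an elementary pointwise comparison of multiplicative factors. In effect the corollary is the symmetry-strengthened analogue of the chain \eqref{eq:TperpTBlendEquivalence}--\eqref{eq:TperpModelEquivalence}, and the only genuinely new ingredient is the last comparison.

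First I would unwind the definitions. Since $\vecTBlendBF^{\ia\ua\ub}=\vecTBlend^\ia\delta^{\ua\ub}$, contracting the symmetry indices of $\EMS_{\ia\ib\ua\ub}[f]$ against $\delta^{\ua\ub}$ collapses the pair of multiplicative symmetries into the single non-negative factor
\[
\delta^{\ua\ub}\SymOp_\ub\SymOp_\ua=\sum_\ua\SymOp_\ua^2=e^4+(el_z)^2+l_z^4+q^2 .
\]
Writing $g:=\big(\sum_\ua\SymOp_\ua^2\big)f$, one then reads off directly from the definition of $\GenEnergy{\cdot}$ and of $\EMS_{\ia\ib\ua\ub}$ that $\GenEnergy{\vecTBlendBF}[f](\Sigma_t)=\GenEnergy{\vecTBlend}[g](\Sigma_t)$. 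Because each $\SymOp_\ua$ is polynomial in $v$, the factor $\sum_\ua\SymOp_\ua^2$ is continuous and non-negative, so $g$ is again a continuous, non-negative function on $\fdcone$, to which Lemma \ref{lem:EnergyBounds} applies.

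Applying \eqref{eq:TperpTBlendEquivalence} and \eqref{eq:TperpWithhp} of Lemma \ref{lem:EnergyBounds} to $g$ then gives
\[
\GenEnergy{\vecTBlendBF}[f](\Sigma_t)
\simeq\int_{\Sigma_t}\int_{\fdconex}
\left(\frac{(r^2+a^2)^2}{\KDelta}v_t^2+\KDelta v_r^2+v_\theta^2+\frac{1}{\sin^2\theta}v_\phi^2\right)
\Big(\sum_\ua\SymOp_\ua^2\Big)f\,\diVolCone\diVolSigmat .
\]
It remains to compare $\sum_\ua\SymOp_\ua^2$ with the factor $|M^2v_t^2+v_\theta^2+\sin^{-2}\theta v_\phi^2|^2$ appearing in $|f|_2$, and to pass from $\diVolCone\diVolSigmat$ to $\diVolModelv\diVolModelx$; the latter is immediate from the uniform equivalences $\sqrt{|\metricg|}=\KSigma\sin\theta\simeq r^2\sin\theta$ and the discussion of $\diVolCone$ in Section \ref{sec:stressenergy}.

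The main (and essentially only) computational step is this pointwise comparison. Recalling $e=v_t$, $l_z=v_\phi$, and $q=\TensorQ^{\ia\ib}v_\ia v_\ib=v_\theta^2+\cot^2\theta\,v_\phi^2+a^2\sin^2\theta\,v_t^2\geq0$, I would first note $\OpL=M^2e^2+l_z^2+q=(M^2+a^2\sin^2\theta)v_t^2+v_\theta^2+\sin^{-2}\theta\,v_\phi^2$, so that for $|a|/M$ small $\OpL\simeq M^2v_t^2+v_\theta^2+\sin^{-2}\theta\,v_\phi^2$ and hence $|f|_2\simeq\OpL^2 f$. Since $e^2,l_z^2,q\geq0$, expanding $\OpL^2=(M^2e^2+l_z^2+q)^2$ shows every cross term is non-negative, giving $\OpL^2\simeq e^4+l_z^4+q^2$ (with constants depending on $M$); the extra term in $\sum_\ua\SymOp_\ua^2$ is harmless because $(el_z)^2\leq\tfrac12(e^4+l_z^4)$, whence $\sum_\ua\SymOp_\ua^2\simeq e^4+l_z^4+q^2$. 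Combining the two comparisons yields $\sum_\ua\SymOp_\ua^2\simeq\OpL^2\simeq|M^2v_t^2+v_\theta^2+\sin^{-2}\theta v_\phi^2|^2$, and substituting into the displayed equivalence identifies the right-hand side with $\EnergyModel[f](t)$. The one point demanding care is that all implicit constants stay uniform over the admissible range of $a$ --- in particular that the $a^2\sin^2\theta\,v_t^2$ contribution to $q$ is dominated by $M^2v_t^2$ --- which is exactly where the smallness of $|a|/M$ is used.
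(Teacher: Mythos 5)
Your proposal is correct and follows essentially the same route as the paper: collapse the symmetry indices of $\GenEnergy{\vecTBlendBF}[f]$ into a single multiplicative factor, apply estimates \eqref{eq:TperpWithhp} and \eqref{eq:TperpTBlendEquivalence} of Lemma \ref{lem:EnergyBounds} to the rescaled distribution, and conclude by a pointwise comparison of factors. The one substantive divergence is your literal reading of $\delta^{\ua\ub}$ as the Kronecker delta on $\mathbb{S}_2$, which produces the factor $\sum_\ua\SymOp_\ua^2=e^4+e^2l_z^2+l_z^4+q^2$, whereas the paper treats the strengthening factor as $\OpL^2=(M^2v_t^2+q+l_z^2)^2$ (this is explicit both in its proof of the corollary and in the bulk-term computation of Section \ref{sec:morawetz4}, where $\GenBulk{\vecTBlendBF}$ is obtained by multiplying by $(M^2v_t^2+q+l_z^2)^2$). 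You honestly flag the consequence: your equivalence $\sum_\ua\SymOp_\ua^2\simeq\OpL^2$ holds only with constants depending on $M$, since the $e^4$ term carries no $M^4$ weight (radial null directions with $l_z=q=0$ make the ratio exactly $M^4$). This matters, because the main theorems assert constants uniform over all $M>0$, and the corollary feeds directly into their proof; under your reading the uniformity would be lost. The paper's notation is genuinely ambiguous here, and its own proof resolves the ambiguity in the only way consistent with $M$-uniformity, namely so that the contraction yields $\OpL^{\ua}\OpL^{\ub}\SymOp_\ua\SymOp_\ub=\OpL^2$; equivalently, one can repair your version by rescaling the basis element $e^2$ to $M^2e^2$ (dimensionally the natural choice) before contracting. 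With that single adjustment your remaining steps --- the positivity of all cross terms in $\OpL^2$ because $q\geq 0$, the uniform equivalence $\OpL=(M^2+a^2\sin^2\theta)v_t^2+v_\theta^2+\csc^2\theta\,v_\phi^2\simeq M^2v_t^2+v_\theta^2+\csc^2\theta\,v_\phi^2$ for $|a|\leq\bar{\epsilon}M$, and the passage between $\diVolCone\diVolSigmat$ and the model measures --- match the paper's argument exactly.
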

\begin{proof}
This follows from applying estimates \eqref{eq:TperpWithhp} and \eqref{eq:TperpTBlendEquivalence}, substituting $(M^2 v_t^2 +v_\theta^2 +\csc^2\theta v_\phi^2)^2 f$ for $f$, recognising $\GenEnergy{\vecTBlendBF}[f]$ as being obtained from $\GenEnergy{\vecTBlend}[f]$ substituting $(M^2 v_t^2 +q +l_z^2)^2f$ for $f$, and observing the uniform equivalence of $(M^2 v_t^2 +q +l_z^2)$ and $(M^2 v_t^2 +v_\theta^2 +\csc^2\theta v_\phi^2)$. 
\end{proof}

\subsection{Set-up for radial vector fields}\label{sec:morawetz2}
In this subsection, we define a radial $2$-symmetry-strengthened
vector field, $\vecMorawetz$, in terms of unspecified scalar
functions, which will be chosen in the following subsection. The main
result of this subsection is that the bulk term,
$\GenBulk{\vecMorawetz}$, can be written as a sum of two terms, with
the second involving a square and the first involving a second
derivative. A square is always non-negative. One should expect that the
second-derivative term will be non-negative on orbits, since the orbits
are known to be unstable. In the following subsection, the scalar
functions are chosen so that this second-derivative term is
non-negative everywhere, not just on the orbits.

\begin{definition}
\label{defn:vecMorawetz}
If $\fnMa$ and $\fnMb$ are smooth functions of $r$ and the parameters $M$ and $a$, the Morawetz vector field and the reduced scalar functions are defined to be
\begin{align*}
\vecMorawetz^{\ia\ua\ub}&= -\fnMa \fnMb \OpL^{(\ua} \DiffCurlyRTilde^{\ub)} \pr^\ia , \\
\fnq^{\ua\ub} &= \frac12 (\pr \fnMa)\fnMb \OpL^{(\ua} \DiffCurlyRTilde^{\ub)}, 
\end{align*}
where
\begin{align*}
\DiffCurlyRTilde^\ua&= \pr\left(\frac{\fnMa}{\KDelta}\CurlyR^\ua\right),
\label{eq:DefnDiffCurlyRTilde}\\
\OpL &= \OpL^\ua\SymOp_\ua =M^2 e^2 +l_z^2 + q,\nonumber
\end{align*}
and $\CurlyR^\ua$ and $\OpL$ are defined in equations \eqref{eq:Ra} and \eqref{eq:La}. We also introduce 
\begin{align*}
\DDiffCurlyRTTilde&= \pr \left(\fnMb\frac{\fnMa^{1/2}}{\KDelta^{1/2}}\DiffCurlyRTilde\right).
\end{align*}
\end{definition}

The following lemma is a trivial observation in the current context. This is in contrast with the situation for the wave equation where the rearrangement of the symmetry indices required some calculation and introduced additional terms at the initial and final time, which had to be dominated by the energies. 
\begin{lemma}[Rearrangements]
\begin{align*}
\OpL^{(\ua}\CurlyR^{\ub)}\SymOp_\ua\SymOp_\ub
&=\OpL^{(\ua}\CurlyR^{\ub)}\SymOp_\ua^{\ia\ib}\SymOp_\ub^{\ic\id} v_\ia v_\ib v_\ic v_\id \\
&=\OpL^{\ua}\CurlyR^{\ub}\SymOp_\ua^{\ia\ib}\SymOp_\ub^{\ic\id} v_\ia v_\ib v_\ic v_\id .
\end{align*}
\end{lemma}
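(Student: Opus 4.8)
The plan is to reduce the whole statement to two elementary manipulations: the definitional expansion of each scalar symmetry $\SymOp_\ua$ into its tensorial form, and the symmetry of the resulting contraction under exchange of the two underlined indices. The structural fact that makes the lemma trivial here is that each $\SymOp_\ua$ is a \emph{multiplicative} symmetry---a scalar function on $\fdcone$ of the form $\SymOp_\ua^{\ia\ib}v_\ia v_\ib$---rather than a differential operator, so that the product $\SymOp_\ua\SymOp_\ub$ is ordinary multiplication of functions and is therefore commutative.

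For the first equality I would simply substitute the defining relations $\SymOp_\ua=\SymOp_\ua^{\ia\ib}v_\ia v_\ib$ and $\SymOp_\ub=\SymOp_\ub^{\ic\id}v_\ic v_\id$. Since these are scalars, their product is $\SymOp_\ua\SymOp_\ub=\SymOp_\ua^{\ia\ib}\SymOp_\ub^{\ic\id}v_\ia v_\ib v_\ic v_\id$. The symmetrization bracket $(\ua\ldots\ub)$ is a relabelling acting only on the coefficients $\OpL^\ua\CurlyR^\ub$ and not on the distinct contracted indices $\ia\ib\ic\id$, so it commutes with the substitution and passes through unchanged. This yields the middle expression directly.

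For the second equality the key observation is that the contracted object $\SymOp_\ua^{\ia\ib}\SymOp_\ub^{\ic\id}v_\ia v_\ib v_\ic v_\id=\SymOp_\ua\SymOp_\ub$ is symmetric under $\ua\leftrightarrow\ub$, by commutativity of scalar multiplication. Contracting a quantity that is already symmetric in the pair $(\ua,\ub)$ against the symmetrized coefficients $\OpL^{(\ua}\CurlyR^{\ub)}$ gives the same result as contracting against the unsymmetrized $\OpL^\ua\CurlyR^\ub$: writing $\OpL^{(\ua}\CurlyR^{\ub)}=\tfrac12(\OpL^\ua\CurlyR^\ub+\OpL^\ub\CurlyR^\ua)$ and relabelling $\ua\leftrightarrow\ub$ in the second term, using $\SymOp_\ua\SymOp_\ub=\SymOp_\ub\SymOp_\ua$, recovers $\OpL^\ua\CurlyR^\ub\SymOp_\ua\SymOp_\ub$, which is the right-hand side.

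There is essentially no obstacle; the content of the lemma is precisely that no obstacle arises, and I would keep the argument to the two lines above. It is worth emphasising the contrast flagged in the text: for the wave equation the analogous ``symmetries'' are differential operators built from the Killing vectors and the Carter operator, which do not commute, so reordering them produces commutator terms together with boundary contributions at the initial and final hypersurfaces that must be separately dominated by the energies. Here the multiplicative nature of the Vlasov symmetries eliminates all such terms, and the rearrangement is exact with no correction---which is exactly why this step, nontrivial in \cite{AnderssonBlue:KerrWave}, becomes a one-line observation.
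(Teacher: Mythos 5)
Your proof is correct and follows essentially the same route as the paper: substitute the definition $\SymOp_\ua=\SymOp_\ua^{\ia\ib}v_\ia v_\ib$ for the first equality, then observe that the contraction against four copies of $v$ (equivalently, commutativity of the scalar product $\SymOp_\ua\SymOp_\ub$) makes the expression symmetric in $\ua\ub$, so the symmetrization brackets may be dropped. Your closing remark contrasting this with the operator-ordering issues for the wave equation matches the paper's own framing of why the lemma is trivial here.
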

\begin{proof}
Apply the definition $\SymOp_\ua=\SymOp_\ua^{\ia\ib} v_\ia v_\ib$ and similarly in $\ub$, and then observe that the contraction in $\ia\ib\ic\id$ is against four copies of $v$, so that it is automatically symmetric in $\ua\ub$. 
\end{proof}

\begin{lemma}
\label{Lemma:BasicMorawetzDeformation}
With $\vecMorawetz$ and $\fnq$ as above, one finds
\begin{align*}
&\GenBulk{\vecMorawetz,\Omega,\fnq}\\
&=\OpL^{\ua}\left(-\fnMa^{1/2}\KDelta^{3/2} \pr\left(\fnMb\frac{\fnMa^{1/2}}{\KDelta^{1/2}}\DiffCurlyRTilde^{\ub}\right)\pr^\ie\pr^\ig 
+\frac12\fnMb\DiffCurlyRTilde^{\ub} \DiffCurlyRTilde^{\uc}\SymOp_\uc^{\ie\ig}\right)\SymOp_\ua^{\ia\ib}\SymOp_\ub^{\ic\id} v_\ia v_\ib v_\ic v_\id v_\ie v_\ig f.
\end{align*}
\end{lemma}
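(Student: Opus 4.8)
The plan is to reduce the claimed tensorial identity to a purely algebraic computation on a single fibre $\fdconex$, and then carry out that computation using the special form \eqref{eq:InverseMetricWithCurlyR} of the inverse metric. Since $\EMS_{\ia\ib\ua\ub}[f]$ is obtained by integrating $\SymOp_\ub\SymOp_\ua f\,v_\ia v_\ib$ against $\diVolCone$, and the symmetry factors $\SymOp_\ua$ together with the coefficients $\OpL^\ua,\CurlyR^\ua,\DiffCurlyRTilde^\ua$ are constants of the geodesic motion (in particular $\OpL$ has constant coefficients, so $\pr\OpL=0$), the statement is really an identity between the integrands. Because the Lie derivative is linear in its vector field and the $\SymOp_\ua$ are scalars from the spacetime viewpoint, $\SymOp_\ua\SymOp_\ub\Lie_{\vecMorawetz^{\ua\ub}}(\Omega^{-2}\metricg^{\ia\ib})=\Lie_{W\pr}(\Omega^{-2}\metricg^{\ia\ib})$ with scalar coefficient $W=-\fnMa\fnMb\OpL\DiffCurlyRTilde$. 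It therefore suffices to prove the scalar identity obtained by contracting $-\tfrac12\Lie_{W\pr}(\Omega^{-2}\metricg^{\ia\ib})-\fnq\,\Omega^{-2}\metricg^{\ia\ib}$ with $v_\ia v_\ib$, and then to reinstate the symmetry indices at the very end.

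For the computation I would work from $\Omega^{-2}\metricg^{\ia\ib}=\KDelta\pr^\ia\pr^\ib+\KDelta^{-1}\CurlyR^{\ia\ib}$. The crucial structural input is that $\CurlyR^{\ia\ib}$ is built only from $\pt,\pp,\ph$ and hence has no radial component, $\CurlyR^{r\ib}=0$; so the radial row of $\Omega^{-2}\metricg^{\ia\ib}$ is simply $\KDelta\pr^\ib$. Using this, together with the fact that $\pr^\ia$ is a coordinate vector field, the Lie derivative along $W\pr$ collapses to $\Lie_{W\pr}(\Omega^{-2}\metricg^{\ia\ib})=W\pr(\Omega^{-2}\metricg^{\ia\ib})-2\KDelta(\pr W)\pr^\ia\pr^\ib$. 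Contracting with $v_\ia v_\ib$ and writing $v_r=\pr^\ia v_\ia$ and $\CurlyR=\CurlyR^{\ia\ib}v_\ia v_\ib$, the deformation becomes a combination of $v_r^2$, $\CurlyR$, and $\pr\CurlyR$, with $r$-dependent coefficients built from $W$, $\fnq$, $\KDelta$ and their radial derivatives.

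Next I would invoke the null constraint: on $\fdconex$ one has $\metricg(v,v)=0$, i.e. $\Omega^{-2}\metricg^{\ia\ib}v_\ia v_\ib=\KDelta v_r^2+\KDelta^{-1}\CurlyR=0$, so $\CurlyR=-\KDelta^2 v_r^2$. This immediately annihilates the $\fnq$ term, $\fnq\,\Omega^{-2}\metricg^{\ia\ib}v_\ia v_\ib=0$ (consistent with $\EMS$ being trace-free), and it lets me trade every bare $\CurlyR$ for $-\KDelta^2v_r^2$. The derivative $\pr\CurlyR$ I would eliminate using the defining relation $\DiffCurlyRTilde^\ua=\pr(\tfrac{\fnMa}{\KDelta}\CurlyR^\ua)$ to solve for $\pr\CurlyR$ in terms of $\DiffCurlyRTilde$ and $\CurlyR$; substituting, the genuinely $\DiffCurlyRTilde$ part produces the square term $\tfrac12\OpL\fnMb\DiffCurlyRTilde^2$, while the residual $\CurlyR$ is again replaced by $-\KDelta^2 v_r^2$. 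At this stage everything that is not the square is proportional to $\OpL\,v_r^2$.

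The main obstacle — and the one genuinely informative step — is recognizing that the resulting $v_r^2$-coefficient is a perfect ``reverse product rule''. After the substitutions it takes the form $-\tfrac12(\pr u)\fnMb\DiffCurlyRTilde-u\,\pr(\fnMb\DiffCurlyRTilde)$ with $u=\fnMa/\KDelta$, and the point is that this equals $-u^{1/2}\pr\!\left(u^{1/2}\fnMb\DiffCurlyRTilde\right)$. This is exactly where the half-powers $\fnMa^{1/2}$ and $\KDelta^{-1/2}$, and hence the factor $\DDiffCurlyRTTilde=\pr(\fnMb\tfrac{\fnMa^{1/2}}{\KDelta^{1/2}}\DiffCurlyRTilde)$ of Definition \ref{defn:vecMorawetz}, are forced to appear, giving the coefficient $-\OpL\fnMa^{1/2}\KDelta^{3/2}\DDiffCurlyRTTilde\,v_r^2$. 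Verifying this one-line algebraic identity, together with careful bookkeeping of the conformal factor $\Omega^2=\KSigma^{-1}$ and of the symmetrization over $\ua,\ub$ (which by the Rearrangements lemma is automatic once everything is contracted against copies of $v$), is the only nontrivial part; restoring the symmetry tensors $\SymOp_\ua^{\ia\ib}\SymOp_\ub^{\ic\id}$ and $\SymOp_\uc^{\ie\ig}$ and the factor $\pr^\ie\pr^\ig$ for $v_r^2$ then yields the stated expression.
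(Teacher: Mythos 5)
Your proposal is correct, and its core is the same computation as the paper's: the same decomposition $\Omega^{-2}\metricg^{\ia\ib}=\KDelta\pr^\ia\pr^\ib+\KDelta^{-1}\CurlyR^{\ia\ib}$, the same collapse of the Lie derivative along a radial field using $\CurlyR^{r\ib}=0$, the same appeal to the Rearrangements lemma for the symmetry indices, and the same reverse product rule $-\tfrac12(\pr u)\fnMb\DiffCurlyRTilde-u\,\pr(\fnMb\DiffCurlyRTilde)=-u^{1/2}\pr\bigl(u^{1/2}\fnMb\DiffCurlyRTilde\bigr)$ with $u=\fnMa/\KDelta$, which is what forces the half-powers in $\DDiffCurlyRTTilde$ (I checked the resulting coefficient $-\fnMa^{1/2}\KDelta^{3/2}\DDiffCurlyRTTilde$ against the expansion; it matches). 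The one genuine difference is your handling of the $\fnq$ term. The paper never invokes the null constraint: it keeps $\fnq^{\ua\ub}\Omega^{-2}\metricg^{\ia\ib}$, splits it into its $\KDelta\pr^\ie\pr^\ig$ and $\KDelta^{-1}\CurlyR^{\ie\ig}$ pieces, and uses the latter to complete $\fnMa\pr(\CurlyR^{\ie\ig}/\KDelta)+(\pr\fnMa)\KDelta^{-1}\CurlyR^{\ie\ig}=\DiffCurlyRTilde^{\ie\ig}$, so its conclusion is an exact identity, pointwise in $v$. You instead drop $\fnq(\KDelta v_r^2+\KDelta^{-1}\CurlyR)=0$ on $\fdconex$ and substitute $\CurlyR=-\KDelta^2v_r^2$ in the residual; since $\fnq\KDelta v_r^2=-\fnq\KDelta^{-1}\CurlyR$ on the cone, this redistributes exactly the same pieces and lands on the same formula, but only as an identity of integrands on the null cone. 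That weaker statement suffices here, because $\GenBulk{\vecMorawetz,\Omega,\fnq}$ only ever appears integrated against $f\,\diVolCone$ over $\fdconex$ — and your parenthetical identification of this mechanism with the tracelessness of $\EMS$ is exactly right. (Incidentally, your bookkeeping also silently repairs a typo in the paper's intermediate display, where the $\fnq$ contribution $\tfrac12(\pr\fnMa)\fnMb\OpL^{(\ua}\DiffCurlyRTilde^{\ub)}\pr^\ie\pr^\ig$ is missing its factor of $\KDelta$, without which the half-power collapse would not close.) One verbal slip worth fixing: $\CurlyR^\ua$ and $\DiffCurlyRTilde^\ua$ are not ``constants of the geodesic motion'' — they are $r$-dependent coefficient functions, and only the $\SymOp_\ua$ and the constant coefficients $\OpL^\ua$ are annihilated by $\pr$; your actual computation treats them correctly (you do differentiate $\DiffCurlyRTilde$ to produce $\DDiffCurlyRTTilde$), so this is phrasing, not a gap.
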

\begin{proof}

Recall
\begin{align*}
\Omega^{-2}\metricg^{\ie\ig}
&=\KDelta\pr^\ie\pr^\ig +\frac{1}{\KDelta}\CurlyR^{\ie\ig}, \\
\Omega^{-2}\GenBulk{\vecMorawetz,\Omega,\fnq}
&=-\frac12 \Lie_{\vecMorawetz^{\ua\ub}}(\Omega^{-2}\metricg^{\ia\ib}) +\Omega^{-2} \fnq^{\ua\ub}\metricg^{\ia\ib} .
\end{align*}
Thus, 
\begin{align*}
\Omega^{-2}\GenBulk{\vecMorawetz,\Omega,\fnq}
&=\Bigg(\frac12\left(\OpL^{(\ua}\fnMa\fnMb\DiffCurlyRTilde^{\ub)}\pr\KDelta-2\KDelta\OpL^{(\ua} \pr\left(\fnMa\fnMb\DiffCurlyRTilde^{\ub)}\right)\right)\pr^\ie\pr^\ig
+\frac12\OpL^{(\ua}\fnMa\fnMb\DiffCurlyRTilde^{\ub)}\pr\left(\frac{\CurlyR^{\ie\ig}}{\KDelta}\right) \\
&\qquad+\frac12(\pr\fnMa)\fnMb\OpL^{(\ua}\DiffCurlyRTilde^{\ub)}\pr^\ie\pr^\ig
+\frac12(\pr\fnMa)\fnMb\OpL^{(\ua}\DiffCurlyRTilde^{\ub)}\frac{1}{\KDelta}\CurlyR^{\ie\ig} \Bigg)\\
&\qquad\SymOp_\ua^{\ia\ib}\SymOp_{\ub}^{\ic\id} v_\ia v_\ib v_\ic v_\id v_\ie v_\ig \\
&=\OpL^\ua\Bigg(-\fnMa^{1/2}\KDelta^{3/2}\pr\left(\fnMb\frac{\fnMa^{1/2}}{\KDelta^{1/2}}\DiffCurlyRTilde^{\ub}\right)\pr^\ie\pr^\ig
+\frac12\fnMb\DiffCurlyRTilde^{\ub}\pr\left(\frac{\fnMa}{\KDelta}\CurlyR^{\ie\ig}\right) \Bigg)\\
&\qquad\SymOp_\ua^{\ia\ib}\SymOp_{\ub}^{\ic\id} v_\ia v_\ib v_\ic v_\id v_\ie v_\ig .
\end{align*}
Substituting
$\DiffCurlyRTilde^{\ie\ig}=\pr\left(\frac{\fnMa}{\KDelta}\CurlyR^{\ie\ig}\right)$
gives the desired result. 
\end{proof}

\subsection{Choosing the weights}\label{sec:morawetz3}
In this subsection, we choose the weights $\fnMa$ and $\fnMb$, so that
$\GenBulk{\vecMorawetz,\Omega,\fnq}$ is non-negative for all $r$. The
choices are the same as those appearing for the wave equation in
\cite{AnderssonBlue:KerrWave}.

Here, we recall how the weight functions $\fnMa$ and $\fnMb$ are chosen, following the explanation in remark 3.8 of \cite{AnderssonBlue:KerrWave}. The goal in choosing the
various weight functions is to obtain nonnegativity for the two terms
in $\GenBulk{\vecMorawetz,\Omega,\fnq}$, namely
$-\fnMa^{1/2}\KDelta^{3/2}
\pr\left(\fnMb\frac{\fnMa^{1/2}}{\KDelta^{1/2}}\DiffCurlyRTilde\right)v_r^2$
and $\frac12\fnMb\DiffCurlyRTilde \DiffCurlyRTilde$. For $|a|\ll M$,
the orbiting null geodesics are near $r=3M$. On orbiting null
geodesics,
$\DiffCurlyRTilde(r;M,a;\GeodesicEnergy,\GeodesicLz,\GeodesicQ)$
vanishes and
$-\DDiffCurlyRTTilde(r;M,a;\GeodesicEnergy,\GeodesicLz,\GeodesicQ)$ is
positive. Thus, the desired non-negativity holds on orbiting null geodesics regardless of the choice of $\fnMa$ and $\fnMb$. The functions $\fnMa$ and $\fnMb$ are chosen so that the non-negativity extends to all other null geodesics. These functions can be chosen so that 
$-\DDiffCurlyRTTilde(r;M,a;\GeodesicEnergy,\GeodesicLz,\GeodesicQ)$
remains positive everywhere and so that $\DiffCurlyRTilde(r;M,a;\GeodesicEnergy,\GeodesicLz,\GeodesicQ)$
vanishes only in a neighbourhood of $r=3M$. 

We have chosen the weights so that the following properties hold:
\begin{enumerate}
\item The definition of $\DiffCurlyRTilde$ in equation
  \eqref{eq:DefnDiffCurlyRTilde} is made so that $\fnMb\DiffCurlyRTilde\pr\left(\frac{\fnMa}{\KDelta}\CurlyR\right)$ takes the form
  $\fnMb\DiffCurlyRTilde^2$ in Lemma \ref{Lemma:BasicMorawetzDeformation}. 
\item $M^2\epsilondtsquared^2$ is the coefficient of
  $\GeodesicEnergy^2$ in
  $\DiffCurlyRTilde(r;M,a;\GeodesicEnergy,\GeodesicLz,\GeodesicQ)$ and
  $\DDiffCurlyRTTilde(r;M,a;\GeodesicEnergy,\GeodesicLz,\GeodesicQ)$. Note
  that $M^2\epsilondtsquared^2$ plays the same role as
  $\epsilon_{\partial_t^2}^2$ in \cite{AnderssonBlue:KerrWave}, where
  the differential symmetry operator $\partial_t^2$ for the wave
  equation plays the role of the multiplicative symmetry
  $\GeodesicEnergy^2$ for the Vlasov equation. The use of a
  dimensionless parameter, $\epsilondtsquared$, in this paper
  clarifies that the small parameter $|a|/M$ can be chosen uniformly
  in $M$ to close the bootstrap argument.
\item $\fnMaa$ is such that, if $\fnMab$ had been equal to $1$, which
  corresponds to $\epsilondtsquared=0$, then the
  coefficient of $M^2\GeodesicEnergy^2$ in $\DiffCurlyRTilde(r;M,a;\GeodesicEnergy,\GeodesicLz,\GeodesicQ)$ would be zero. 
\item $\fnMab$ is such that, if $\epsilondtsquared>0$,
  then the coefficient of
  $M^2\epsilondtsquared\GeodesicEnergy^2$ in $\DiffCurlyRTilde(r;M,a;\GeodesicEnergy,\GeodesicLz,\GeodesicQ)$ is non-negative and a perturbation (in $\epsilondtsquared$) of the coefficient of $\GeodesicQ$.
\item $\fnMba$ is such that, if $\fnMab$ and $\fnMbb$ had both been
  equal to $1$, then the coefficient of $M\GeodesicEnergy\GeodesicLz$
  in
  $\DDiffCurlyRTTilde(r;M,a;\GeodesicEnergy,\GeodesicLz,\GeodesicQ)$
  would vanish.
\item $\fnMbb$ is such that 
\begin{enumerate}
\item  $\DDiffCurlyRTTilde(r;M,a;\GeodesicEnergy,\GeodesicLz,\GeodesicQ)$ is positive
  everywhere, and 
\item $(\fnMa\fnMb
  \DiffCurlyRTilde(r;M,a;\GeodesicEnergy,\GeodesicLz,\GeodesicQ))^2\metricg(\partial_r,\partial_r)\lesssim (M^2\GeodesicEnergy^2+\GeodesicLz^2+\GeodesicQ)^2
  \metricg(\vecTBlend,\vecTBlend)$. \label{condition6.2}
\end{enumerate}
In particular, from the dominant energy condition, condition
\ref{condition6.2} allows us to show that
$\GenEnergy{\vecMorawetz}\lesssim
\GenEnergy{\vecTBlendBF}$. Once the form $\fnMbb=Cr^{-1}$ was
chosen, the factor of $C=1/2$ was chosen so that, when $a=0$ and $\epsilondtsquared=0$, the coefficient of $\GeodesicLz^2+\GeodesicQ$ in
$\DDiffCurlyRTTilde$ is equal to $1$.
\end{enumerate}

The factors $\DiffCurlyRTilde$, $\fnMaa$, $\fnMab$, and $\fnMaa$ are uniquely defined by the above properties. In contrast, the factor $\fnMbb$ is both overdetermined, since we have chosen it to satisfy two conditions that are not a priori obviously compatible, and underdetermined, since it so happens that there are many functions that allow these two conditions to be satisfied.

\begin{definition}
Given a positive value for the parameter $\epsilondtsquared$, we use the following weights to define the Morawetz vector field, 
\begin{align*}
\fnMa&=\fnMaa \fnMab, &
\fnMb&=\fnMba \fnMbb, \\
\fnMaa&= \frac{\KDelta}{(r^2+a^2)^2}, &
\fnMba&= \frac{(r^2+a^2)^4}{3r^2-a^2}, \\
\fnMab&= 1-M^2\epsilondtsquared \frac{\KDelta}{(r^2+a^2)^2} ,&
\fnMbb&= \frac{1}{2r} .
\end{align*}
\end{definition}
The reason for these choices is explained in Remark 3.8 of \cite{AnderssonBlue:KerrWave}. 

In the following lemma, big-$O$ notation is used in the $r$
variable. The notation $f=O(r^{-l})$ means that $f$ is independent of
$v_r$, $\GeodesicEnergy$, $\GeodesicLz$, and $\GeodesicQ$ and that there is
a constant $C$ such that for positive $M$ and sufficiently small $|a|/M$,
uniformly in $r>r_+$, there is the bound $|f(r,M,a)|\leq C r^{-l}$. The
notation $f = g +h O(r^{-l})$ denotes that there is a function
$k=k(r,M,a)$ such that $k=O(r^{-l})$. The notation $f=g +h_1 O(r^{-l_1}) +\ldots
+h_n O(r^{-l_n})$ is defined recursively. 

\begin{lemma}
\label{lem:MorawetzDifferentiated}
There are positive constants $\bar{\epsilon}$, $\epsilondtsquared$, and $C$ such that if $|a|\leq \bar{\epsilon}M$, $0<\epsilondtsquared\leq\bar{\epsilondtsquared}$ and $f:\fdcone\rightarrow[0,\infty)$ is a solution of the Vlasov equation, then
\begin{align}
C \Omega^2 \GenBulk{\vecMorawetz}
\geq&  M\frac{\KDelta^2}{(r^2+a^2)^2} v_r^2 |f|_{2} 
+r^5\DiffCurlyRTilde\DiffCurlyRTilde \OpL f .
\end{align}
and
\begin{align}
\DiffCurlyRTilde
&= -2r^{-4}(r-3M) \OpL_{\epsilondtsquared} \nonumber\\
&\qquad +aM O(r^{-4}) e l_z \nonumber\\
&\qquad +a^2 \left(O(r^{-5})q +O(r^{-5})l_z^2\right) \nonumber\\
&\qquad +M^2\epsilondtsquared\left( a^2O(r^{-5})e^2 +O(r^{-5})q +O(r^{-5})l_z^2\right) .
\label{eq:DiffCurlyRTildeExpanded}
\end{align}
\end{lemma}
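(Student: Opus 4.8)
The plan is to prove the two displayed conclusions of Lemma~\ref{lem:MorawetzDifferentiated} essentially independently, since the first is a structural/positivity statement built on Lemma~\ref{Lemma:BasicMorawetzDeformation} together with the concrete weights, while the second, equation~\eqref{eq:DiffCurlyRTildeExpanded}, is a direct (if tedious) Taylor expansion of $\DiffCurlyRTilde$ in the parameters $a$ and $\epsilondtsquared$. I would begin with the expansion, because the positivity argument depends on reading off the leading behaviour of $\DiffCurlyRTilde$ and $\DDiffCurlyRTTilde$ from it.

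First I would substitute the explicit weights $\fnMaa=\KDelta/(r^2+a^2)^2$, $\fnMab=1-M^2\epsilondtsquared\KDelta/(r^2+a^2)^2$, and the definitions of $\CurlyR^\ua$ into $\DiffCurlyRTilde^\ua=\pr(\fnMa\CurlyR^\ua/\KDelta)$. Using $\CurlyR=\CurlyR^\ua\SymOp_\ua$ with $\SymOp_\ua$ ranging over $\{e^2,el_z,l_z^2,q\}$ and the coefficients read off from $\CurlyR^{\ia\ib}=-(r^2+a^2)^2\pt^\ia\pt^\ib-4aMr\pt^{(\ia}\pp^{\ib)}+(\KDelta-a^2)\pp^\ia\pp^\ib+\KDelta\TensorQ^{\ia\ib}$, one computes $\fnMaa\CurlyR/\KDelta=-e^2-4aMr(r^2+a^2)^{-2}el_z+(\KDelta-a^2)(r^2+a^2)^{-2}l_z^2+\KDelta(r^2+a^2)^{-2}q$. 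The $\fnMab$ factor contributes the $M^2\epsilondtsquared$ corrections. Differentiating in $r$ and expanding each rational coefficient in $a$ (treating $a/M$ as small and $\KDelta=r^2+O(Mr)$, $(r^2+a^2)^{-2}=r^{-4}+O(a^2r^{-6})$) should reproduce~\eqref{eq:DiffCurlyRTildeExpanded} term by term, with the $q+l_z^2$ leading part combining into $\OpL_{\epsilondtsquared}$ and the critical leading factor $-2r^{-4}(r-3M)$ emerging from $\pr(\KDelta/(r^2+a^2)^2 \cdot r^{-?})$-type differentiation; the vanishing of this factor at $r=3M$ is exactly the photon-sphere condition. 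This step is routine but bookkeeping-heavy, and I would organise it by powers of $a$ and $\epsilondtsquared$ to match the stated $O(r^{-l})$ structure.

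For the positivity statement, I would invoke Lemma~\ref{Lemma:BasicMorawetzDeformation}, which already writes $\Omega^2\GenBulk{\vecMorawetz}$ as a sum of a $\DDiffCurlyRTTilde$-term multiplying $v_r^2$ and a manifestly-squared term $\tfrac12\fnMb\DiffCurlyRTilde^2$. The strategy is to show that, with the chosen weights, $-\DDiffCurlyRTTilde$ is positive and bounded below by a multiple of $\OpL=M^2e^2+l_z^2+q$ uniformly in $r$, so that the first term dominates $M\KDelta^2(r^2+a^2)^{-2}v_r^2|f|_2$ after multiplying by $\OpL$ (recall $|f|_2=|M^2v_t^2+v_\theta^2+\sin^{-2}\theta v_\phi^2|^2 f$ and $\OpL\simeq M^2e^2+v_\theta^2+\sin^{-2}\theta v_\phi^2$ pointwise), while the squared term reproduces $r^5\DiffCurlyRTilde\DiffCurlyRTilde\OpL f$ after accounting for $\fnMb=\fnMba\fnMbb\simeq r^5$ at the relevant orders. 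Here I would use property~(6a) of the weight choices, namely that $\DDiffCurlyRTTilde$ is positive everywhere, which was the purpose of selecting $\fnMbb=1/(2r)$ and $\fnMba=(r^2+a^2)^4/(3r^2-a^2)$; the expansion~\eqref{eq:DiffCurlyRTildeExpanded} and its analogue for $\DDiffCurlyRTTilde$ show the leading coefficients are the desired positive constants with small $a,\epsilondtsquared$ corrections.

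\textbf{The main obstacle} I anticipate is not the symbolic expansion itself but verifying the \emph{uniform} positivity of $-\DDiffCurlyRTTilde$ across all $r\in(r_+,\infty)$ and all constants of motion simultaneously. The leading term $-2r^{-4}(r-3M)\OpL_{\epsilondtsquared}$ in $\DiffCurlyRTilde$ changes sign at $r=3M$, so $\DiffCurlyRTilde^2$ alone is insufficient near the photon sphere; positivity there must come entirely from the $\DDiffCurlyRTTilde v_r^2$ term, and one must check that the $aMel_z$, $a^2(q+l_z^2)$, and $M^2\epsilondtsquared$ cross-terms do not destroy the sign when $e,l_z,q$ are taken in the worst-case ratios. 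This is a quadratic-form positivity question in the variables $(e,l_z,\sqrt{q})$ with $r$-dependent coefficients, and the delicate point is that the off-diagonal $el_z$ term (of size $aM O(r^{-4})$) must be controlled by the diagonal entries uniformly in $r$, which is precisely what the careful choice of $\fnMba$ and the smallness of $|a|/M$ (governed by $\bar\epsilon$) are designed to guarantee. I would close this by a completing-the-square / discriminant estimate, choosing $\bar\epsilondtsquared$ and then $\bar\epsilon$ small enough that the discriminant stays negative for all $r$.
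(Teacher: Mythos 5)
Your proposal follows the paper's own proof essentially step for step: first a direct expansion of $\DiffCurlyRTilde$ (and then of $-\DDiffCurlyRTTilde$) in powers of $a$ and $\epsilondtsquared$ using the explicit weights, then a Cauchy--Schwarz/discriminant control of the off-diagonal $e l_z$ term --- fixing $\epsilondtsquared$ before $|a|/M$, exactly as the paper does --- to obtain the uniform lower bound $-\DDiffCurlyRTTilde \gtrsim M r^{-2}\OpL$, and finally substitution into Lemma~\ref{Lemma:BasicMorawetzDeformation} with $\fnMb \simeq r^5$ to produce both terms of the stated estimate. The approach, including the identification of the photon-sphere degeneracy of the $\DiffCurlyRTilde^2$ term near $r=3M$ being compensated by the $v_r^2$ term, is correct and matches the paper's argument.
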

\begin{proof}
Direct calculation of $\DiffCurlyRTilde$ with our choices of $\fnMa$ and $\fnMb$ gives
\begin{align*}
\DiffCurlyRTilde
&= -M^2\epsilondtsquared(2(r-3M)r^{-4} +a^2O(r^{-5})) e^2 \\
&\qquad + aMO(r^{-4}) el_z \\
&\qquad -(2(r-3M)r^{-4} +a^2O(r^{-5}) +M^2\epsilondtsquared O(r^{-5}) q \\
&\qquad -(2(r-3M)r^{-4} +a^2O(r^{-5}) +M^2\epsilondtsquared O(r^{-5}) l_z^2 .
\end{align*}
Grouping the terms in orders of $\epsilondtsquared$ and $a$ gives
equation \eqref{eq:DiffCurlyRTildeExpanded}. From this, 
\begin{align*}
-\DDiffCurlyRTTilde
&= M^3\epsilondtsquared(r^{-2} +a^2O(r^{-3}) +M^2\epsilondtsquared O(r^{-3})e^2 \\
&\qquad +aMO(r^{-2}) e l_z \\
&\qquad +M(r^{-2} +a^2O(r^{-3}) +M^2\epsilondtsquared O(r^{-3})) q\\
&\qquad +M(r^{-2} +a^2O(r^{-3}) +M^2\epsilondtsquared O(r^{-3})) l_z^2 .
\end{align*}
In the coefficients of $M^2\epsilondtsquared e^2$, $q$, and $l_z^2$, the $Mr^{-2}$ term dominates the remaining terms for sufficiently small $|a|$ and $\epsilondtsquared$. From the Cauchy-Schwarz inequality, the $el_z$ term is dominated by the $e^2$ and $l_z^2$ terms for sufficiently small $|a|$. Thus, fixing $\epsilondtsquared$ sufficiently small and choosing a constant accordingly, 
\begin{align*}
-\DDiffCurlyRTTilde 
&\geq C M (r^2+a^2)^{-1}(M^2e^2 +q +l_z^2) . 
\end{align*}
Thus, for $|a|$ sufficiently small and $\epsilondtsquared$ as above, 
\begin{align*}
\Omega^2 \GenBulk{\vecMorawetz}
\geq& C M\frac{\KDelta^2}{(r^2+a^2)^2} v_r^2 |f|_{2} 
+\frac{1}{4r}\frac{(r^2+a^2)^4}{3r^2-a^2}\DiffCurlyRTilde\DiffCurlyRTilde \OpL f .
\end{align*}
Thus, there is a new constant $C$, such that
\begin{align*}
C \Omega^2 \GenBulk{\vecMorawetz}
\geq&  M\frac{\KDelta^2}{(r^2+a^2)^2} v_r^2 |f|_{2} 
+r^5\DiffCurlyRTilde\DiffCurlyRTilde \OpL f .
\end{align*}
\end{proof}

\begin{lemma}[Controlling the boundary terms]
\label{lem:ControllingBoundaryTerms}
With $\epsilondtsquared$ as in Lemma \ref{lem:MorawetzDifferentiated}, there is a constant $C$ such that for any $f:\fdcone\rightarrow\Reals$ and $t\in\Reals$, 
\begin{align*}
|\GenEnergy{\vecMorawetz}[f](\Sigma_t)|
\leq C |\GenEnergy{\vecTBlendBF}[f](\Sigma_t)| .
\end{align*}
\end{lemma}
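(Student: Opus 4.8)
The plan is to obtain the bound from the dominant energy condition for the $2$-symmetry-strengthened stress-energy tensor (property (4) of the lemma in Section~\ref{sec:stressenergy}), which is exactly what condition \ref{condition6.2} was arranged to make available. I carry out the argument for $f\geq 0$, the case relevant to the bootstrap, for which $\GenEnergy{\vecTBlendBF}[f]\geq 0$ by the corollary of Section~\ref{sec:morawetz1}, so that $|\GenEnergy{\vecTBlendBF}[f]|=\GenEnergy{\vecTBlendBF}[f]$. Writing both energies against the future-directed normal $\diNormal_{\Sigma_t}$, and using $\SymOp_\ua\SymOp_\ub\,\OpL^{(\ua}\DiffCurlyRTilde^{\ub)}=\OpL\,\DiffCurlyRTilde$ and $\SymOp_\ua\SymOp_\ub\delta^{\ua\ub}=\sum_{\ua}\SymOp_\ua^2$, one has
\[
\GenEnergy{\vecTBlendBF}[f](\Sigma_t)=\int_{\Sigma_t}\int_{\fdconex}\Big(\sum_{\ua}\SymOp_\ua^2\Big)\,(v_\ia\vecTBlend^\ia)\,(v_\ib\diNormal_{\Sigma_t}^\ib)\,f\,\diVolCone,
\]
\[
\GenEnergy{\vecMorawetz}[f](\Sigma_t)=-\int_{\Sigma_t}\int_{\fdconex}\fnMa\fnMb\,\OpL\,\DiffCurlyRTilde\,v_r\,(v_\ib\diNormal_{\Sigma_t}^\ib)\,f\,\diVolCone.
\]
Forming $C\GenEnergy{\vecTBlendBF}[f]\pm\GenEnergy{\vecMorawetz}[f]$, the bracketed $v$-integrand collapses to $\metricg(v,W_\pm)\,(v_\ib\diNormal_{\Sigma_t}^\ib)\,f$, where
\[
W_\pm^\ia=C\Big(\sum_{\ua}\SymOp_\ua^2\Big)\vecTBlend^\ia\mp\fnMa\fnMb\,\OpL\,\DiffCurlyRTilde\,\pr^\ia
\]
is a vector on $\mathcal{M}$ whose coefficients depend on $v$ only through $\GeodesicEnergy,\GeodesicLz,\GeodesicQ$. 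Since $f\geq 0$ and $v_\ib\diNormal_{\Sigma_t}^\ib$ is negative along future-directed $v$, both combinations are nonnegative as soon as $\metricg(v,W_\pm)\leq 0$ for every future-null $v$, which in turn holds once each $W_\pm$ is future-directed causal.

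The causal character of $W_\pm$ is where the geometry enters. In Boyer-Lindquist coordinates $\pr$ is $\metricg$-orthogonal to $\pt$ and $\pp$, hence to $\vecTBlend$, so the cross term drops and
\[
\metricg(W_\pm,W_\pm)=C^2\Big(\sum_{\ua}\SymOp_\ua^2\Big)^2\metricg(\vecTBlend,\vecTBlend)+\big(\fnMa\fnMb\,\OpL\,\DiffCurlyRTilde\big)^2\metricg(\pr,\pr).
\]
As $\metricg(\vecTBlend,\vecTBlend)\leq 0$ and $\metricg(\pr,\pr)=\KSigma/\KDelta>0$, and since the $\vecTBlend$-component is future-pointing (automatic: the coefficient $C\sum_{\ua}\SymOp_\ua^2\geq 0$, and the radial term carries the factor $\OpL$, so $W_\pm$ vanishes wherever $\sum_{\ua}\SymOp_\ua^2=\OpL=0$), future-causality of $W_\pm$ reduces to
\[
\big(\fnMa\fnMb\,\DiffCurlyRTilde\big)^2\metricg(\pr,\pr)\leq C^2\,\frac{\big(\sum_{\ua}\SymOp_\ua^2\big)^2}{\OpL^2}\,\big|\metricg(\vecTBlend,\vecTBlend)\big|.
\]
Using the elementary equivalence $\sum_{\ua}\SymOp_\ua^2\simeq\OpL^2=(M^2\GeodesicEnergy^2+\GeodesicLz^2+\GeodesicQ)^2$ (valid since $\GeodesicQ\geq 0$ on $\fdconex$), this is implied by condition \ref{condition6.2}, the constants being absorbed into $C$. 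Fixing $C$ large enough for both signs, the dominant energy condition, applied with $Y^\ib=\diNormal_{\Sigma_t}^\ib$ and symmetry parameters $\SymOp_\ua(v)$, gives $C\GenEnergy{\vecTBlendBF}[f]\pm\GenEnergy{\vecMorawetz}[f]\geq 0$, i.e. $|\GenEnergy{\vecMorawetz}[f]|\leq C\GenEnergy{\vecTBlendBF}[f]$.

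I expect the only real difficulty to be the scalar inequality itself, i.e. condition \ref{condition6.2}; it is not a convenience but is forced by the geometry, and it is exactly what the weights $\fnMa,\fnMb$ of Section~\ref{sec:morawetz3} were designed to deliver. The dangerous region is the horizon: as $r\to r_+$ one has $\metricg(\pr,\pr)=\KSigma/\KDelta\sim\KDelta^{-1}\to\infty$, while $\vecTBlend$ degenerates to the null generator $\pt+\omegaH\pp$ so that $|\metricg(\vecTBlend,\vecTBlend)|\sim\KDelta\to 0$; the inequality survives only because the factor $\fnMaa=\KDelta/(r^2+a^2)^2$ forces $\fnMa\fnMb\sim\KDelta$, so the two vanishing rates match. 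Towards infinity the expansion \eqref{eq:DiffCurlyRTildeExpanded} shows $\fnMa\fnMb\,\DiffCurlyRTilde$ to be a bounded multiple of $\OpL$, while $\metricg(\pr,\pr)$ and $|\metricg(\vecTBlend,\vecTBlend)|$ are both $O(1)$, so the bound closes there with room to spare. Once this balance is secured the remaining steps are the routine bookkeeping above; note that they genuinely use $f\geq 0$, since for signed $f$ the pointwise domination only yields $|\GenEnergy{\vecMorawetz}[f]|\leq C\,\GenEnergy{\vecTBlendBF}[|f|]$.
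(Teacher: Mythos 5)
Your argument is correct, but it is not the route the paper's own proof takes. You prove positivity of $C\GenEnergy{\vecTBlendBF}\pm\GenEnergy{\vecMorawetz}$ by exhibiting, pointwise in $(x,v)$, a future-directed causal combination $W_\pm = C\bigl(\sum_\ua\SymOp_\ua^2\bigr)\vecTBlend \mp \fnMa\fnMb\,\OpL\,\DiffCurlyRTilde\,\pr$, using the orthogonality $\metricg(\vecTBlend,\pr)=0$ and reducing causality to condition \ref{condition6.2} via the equivalence $\sum_\ua\SymOp_\ua^2\simeq\OpL^2$ (which does use $\GeodesicQ\geq 0$, as you note). This is exactly the mechanism the paper \emph{advertises} in its discussion of condition \ref{condition6.2} (``from the dominant energy condition, condition \ref{condition6.2} allows us to show that $\GenEnergy{\vecMorawetz}\lesssim\GenEnergy{\vecTBlendBF}$''), but the proof it actually writes, following Lemma 3.11 of \cite{AnderssonBlue:KerrWave}, is a direct Cauchy--Schwarz estimate: it bounds the integrand of $\GenEnergy{\vecMorawetz}$ by $\tfrac{\KPi}{\KDelta}|\vecTperp^\ia v_\ia|_2^2 + \tfrac{\KPi}{\KDelta}\bigl(\sum_{\ua,\ub}|\vecMorawetz^{r\ua\ub}|^2\bigr)|v_r|_2^2$, uses the coefficient bound $|\vecMorawetz^{r\ua\ub}|\lesssim\KDelta r^{-2}$ so that $\tfrac{\KPi}{\KDelta}\cdot\KDelta^2 r^{-4}\simeq\KDelta$, and concludes from the energy equivalence \eqref{eq:TperpWithhp}. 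The two proofs rest on the same quantitative balance at the horizon --- your observation that $\fnMa\fnMb\DiffCurlyRTilde\sim\KDelta$ must match $|\metricg(\vecTBlend,\vecTBlend)|\sim\KDelta$ is the geometric content of the paper's bound $|\vecMorawetz^{r\ua\ub}|\lesssim\KDelta r^{-2}$ against the $\KDelta v_r^2$ term in the energy --- but they buy different things: the Cauchy--Schwarz route works with absolute values and needs no causality discussion, while your route is structurally cleaner (no AM--GM splitting, the sign comes straight from the dominant energy condition) at the cost of requiring $f\geq 0$. On that last point you are no worse off than the paper: although the lemma is stated for $f:\fdcone\to\Reals$, the paper's own Cauchy--Schwarz proof, applied to signed $f$, likewise only yields $|\GenEnergy{\vecMorawetz}[f]|\leq C\,\GenEnergy{\vecTBlendBF}[|f|]$, and the application in Section \ref{sec:morawetz4} only ever uses nonnegative $f$. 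Your verification sketch of condition \ref{condition6.2} (horizon and infinity asymptotics, the intermediate region by compactness) is at the same level of rigour as the paper, which asserts it by construction of the weights with a reference to Remark 3.8 of \cite{AnderssonBlue:KerrWave}.
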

\begin{proof}
This follows from lemma 3.11 of \cite{AnderssonBlue:KerrWave}. Assume
both energies are defined. By direct computation, 
\begin{align*}
\GenEnergy{\vecMorawetz}
{}&{}=-\int_{\Sigma_{t}}
\left(\EMS_{\ia\ib\ua\ub}\vecMorawetz^{\ia\ua\ub}\right)_\ia\vecTperp^\ia\frac{\KPi}{\KDelta} \sin\theta\di r\di\theta\di\phi ,\\
\left|\GenEnergy{\vecMorawetz}\right|
{}&{}\leq C \int_{\Sigma_{t}} \left(|\vecTperp^\ia v_\ia|
|\vecMorawetz^{r\ia\ib}| |\SymOp_\ua\SymOp_\ub|  |v_r|\frac{\KPi}{\KDelta}  
\right)\sin\theta\di r\di\theta\di\phi \\
{}&{}\leq C\int_{\Sigma_{t}}
\left(\frac{\KPi}{\KDelta}|\vecTperp^\ia  v_\ia|_2^2
+\frac{\KPi}{\Delta}\left(\sum_{\ua,\ub}|\vecMorawetz^{r\ua\ub}|^2\right)| v_r|_2^2 
 \right)\sin\theta\di r\di\theta\di\phi .
\end{align*}
Since $\KPi/\KDelta$, $(r^2+a^2)^2/\KDelta$, and $r^4/\KDelta$ are all
uniformly equivalent and since $\sum_{\ua,\ub}\vecMorawetz^{r\ua\ub}$
is bounded by a multiple of $\KDelta r^{-2}$, it follows from estimate
\eqref{eq:TperpWithhp} that $|\GenEnergy{\vecMorawetz}|\leq C
\GenEnergy{\vecTBlendBF}$. Since this bound followed from the
Cauchy-Schwarz inequality, if $\GenEnergy{\vecTBlendBF}$
is finite, then the absolute value of the integrand in
$\GenEnergy{\vecMorawetz}$ is integrable, and
$\GenEnergy{\vecMorawetz}$. If  $\GenEnergy{\vecTBlendBF}$
is infinite, then the desired estimate holds trivially. Thus, the initial
assumption that both energies are finite is redundant. 
\end{proof}

\subsection{Closing the argument}\label{sec:morawetz4}

\begin{proof}[Proof of Theorems \ref{IntroThm:BoundedEnergy} and
    \ref{IntroTheorems:Morawetz}] Let $t_1,t_2\in\Reals$. Initially,
  assume that $f$ restricted to $\pi^{-1}(\Sigma_{t_1})$ has compact
  support in $\pi^{-1}(\Sigma_{t_1})$. By standard results for the
  Vlasov equation, this means $f$ restricted to $\pi^{-1}(\Sigma_{t})$
  has compact support in each $\pi^{-1}(\Sigma_t)$. From integrating
  the result of Lemma \ref{lem:MorawetzDifferentiated}, one finds
\begin{align*}
\GenEnergy{\vecMorawetz}&[f](t_2)-\GenEnergy{\vecMorawetz}[f](t_1)\\
\geq& \int_{t_1}^{t_2} \int_{\Sigma_t} \int_\fdcone 
\left(M\frac{\KDelta^2}{(r^2+a^2)^2} v_r^2 |f|_{2} 
+r^5\DiffCurlyRTilde\DiffCurlyRTilde \OpL f\right)
\diVolCone\diVol .
\end{align*}
Applying Lemma \ref{lem:ControllingBoundaryTerms}, one finds
\begin{align}
\GenEnergy{\vecTBlendBF}&[f](t_2)+\GenEnergy{\vecTBlendBF}[f](t_1)\nonumber\\
\geq& C\int_{t_1}^{t_2} \int_{\Sigma_t} \int_\fdcone 
\left(M\frac{\KDelta^2}{(r^2+a^2)^2} v_r^2 |f|_{2} 
+r^5\DiffCurlyRTilde\DiffCurlyRTilde \OpL f\right)
\diVolCone\diVol .
\label{eq:IntegratedMorawetz}
\end{align}

From multiplying equation \eqref{eq:TBlendBulk} for
$\GenBulk{\vecTBlend}$ by $(M^2 v_t^2 +q +l_z^2)^2$, one obtains the
bulk term for $\GenBulk{\vecTBlendBF}$. Integrating this over
$\bigcup_{t\in[t_1,t_2]}\Sigma_t$, and observing that $|\pr\chi|$ is
compactly supported and that $\omega_{\mathcal{H}}$ vanishes linearly
in $a$, one finds
\begin{align}
\GenEnergy{\vecTBlendBF}&[f](t_2)-\GenEnergy{\vecTBlendBF}[f](t_1)\nonumber\\
&\leq \int_{t_1}^{t_2}\int_{\Sigma_t}\int_{\fdconex} (M^2 v_t^2 +q +l_z^2)^2 \KDelta |\pr\chi| |v_r||v_\phi| f \diVolCone\KSigma^{-1}\diVol \nonumber\\
&\leq \frac{|a|}{M} C 
\int_{t_1}^{t_2} \int_{\Sigma_t} \int_\fdcone 
M\frac{\KDelta^2}{(r^2+a^2)^2} v_r^2 |f|_{2} 
+r^5\DiffCurlyRTilde\DiffCurlyRTilde \OpL f
\diVolCone\KSigma^{-1}\diVol .
\label{eq:GrowthInTBlendBFEnergyByMorawetz}
\end{align}

Combining equations \eqref{eq:IntegratedMorawetz} and \eqref{eq:GrowthInTBlendBFEnergyByMorawetz} and taking $|a|/M$ sufficiently small, one finds that there is a constant $C$ such that 
\begin{align*}
\GenEnergy{\vecTBlendBF}[f](\Sigma_{t_2})
&\leq C \GenEnergy{\vecTBlendBF}[f](\Sigma_{t_1}) .
\end{align*}
Taking $t_2=t$ and $t_1=0$ proves Theorem \ref{IntroThm:BoundedEnergy}
for solutions with compactly supported data. From this, Estimate
\eqref{eq:IntegratedMorawetz}, and taking the limits
$t_2\rightarrow\infty$ with $t_1=0$ and $t_1\rightarrow-\infty$ with
$t_2=0$, one finds equation \eqref{eq:IntroMorawetzCurlyR}. Observing
that $\DiffCurlyRTilde$ grows like $O(r^{-3})(M^2 e^2 q +l_z^2)$ for
large $r$ and has a simple root near $r=3M$ allows us to replace $r^5
\DiffCurlyRTilde\DiffCurlyRTilde \OpL f$ by $r^{-1} \chiFar (Mv_t^2
+v_\theta^2+v_\phi^2)|f|_2$. This proves estimate
\eqref{eq:IntroMorawetzCutOff} and completes the proof of Theorem
\ref{IntroTheorems:Morawetz} for solutions with compactly supported
data. Since the bounds do not depend on the support of the initial
data, by density Theorems \ref{IntroThm:BoundedEnergy} and
\ref{IntroTheorems:Morawetz} hold for all functions for which
$\GenEnergy{\vecTBlendBF}$ is finite. The theorems follow trivially
when this energy is infinite. 
\end{proof}

\section*{Acknowledgements} J\'er\'emie Joudioux is funded in part by the ANR grant ANR-12-BS01-012-01 ``Asymptotic Analysis in General Relativity''.

\printbibliography 

\end{document}